\documentclass[12pt,leqno]{article}
\usepackage[shortlabels]{enumitem}
\usepackage[utf8]{inputenc} 
\usepackage[T1]{fontenc}
\usepackage{preamble}
\usepackage{tikz-cd}

\title{Uniform weak RC-positivity and rational connectedness \thanks{Mathematics Subject Classification: 32L05, 32J25, 14E08. \newline Keywords: uniform weak RC-positivity, RC-quasi-positivity,  rational connectedness.}
}
\author{Kuang-Ru Wu}

\usepackage{fullpage}
\usepackage{multicol}

\begin{document}

\date{}

\parskip=6pt

\maketitle

\begin{abstract}
In this paper, we show that if the holomorphic tangent bundle $TX$ of a compact K\"ahler manifold $X$ is uniformly weakly RC-positive, then $X$ is projective and rationally connected. This result is previously established by Xiaokui Yang under the stronger assumption that $TX$ is  uniformly RC-positive.  

The result we obtain is, in fact, more general. If a holomorphic vector bundle $E$ is uniformly weakly RC-positive, then $E$ admits a Hermitian metric whose mean curvature is positive. A quasi-positive version is also proved in this paper.
\end{abstract}

\section{Introduction}
The notion of positivity has been an important topic in differential geometry. Their characterization through the positivity notion in algebraic geometry is of particular interest. Since once the characterization is established, one can then transport tools from one area to the other. The positivity notion we study in this paper is RC-positivity whose algebro-geometric counterpart is rational connectedness. RC-positivity is first introduced by Xiaokui Yang in \cite{YangCamb} to solve a conjecture of Yau on projectivity and rational connectedness of a compact K\"ahler manifold with positive holomorphic sectional curvature. Moreover, variants of RC-positivity such as uniform RC-positivity and weak RC-positivity are also introduced and studied by Yang in \cite{YangCamb, YangForum}. 

We first recall the definitions of RC-positivity and its variants. Let $E$ be a holomorphic vector bundle of rank $r$ over a compact complex manifold $X$ of dimension $n$. Given a Hermitian metric $H$ on $E$, we denote the Chern curvature of $H$ by $\Theta^H$. For $u\in E_t$ and $v\in T_tX$ with $t\in X$, we define 
\begin{equation}\label{curvature} H(\Theta^H u,u)(v,\bar{v}):=\sum_{j,k}H(\Theta^H_{j\bar{k}}u,u  )v_j\bar{v}_k  
\end{equation} if we locally write the curvature $\Theta^H=\sum_{j,k}\Theta^H_{j\bar{k}}dt_j\wedge d\bar{t}_k$ and $v=\sum_j v_j \partial/\partial t_j$ (it is clear that (\ref{curvature}) is independent of the choice of coordinates).
\begin{definition}\label{def 1}
    \begin{enumerate}
        \item The Hermitian metric $H$ is called RC-positive if for any $t\in X$ and any nonzero $u\in E_t$, there exists a nonzero $v\in T_t X$ such that $H(\Theta^H u,u)(v,\bar{v})>0$.
        \item The Hermitian metric $H$ is called uniformly RC-positive if for any $t\in X$, there exists a nonzero $v\in T_tX$ such that $H(\Theta^H u,u)(v,\bar{v})>0$ for any nonzero $u\in E_t$.
    \end{enumerate} 
\end{definition}
A bundle $E$ is called (uniformly) RC-positive if it admits a (uniformly) RC-positive Hermitian metric. On the other hand, (uniform) weak RC-positivity is defined on the line bundle $O_{P(E^*)}(1)$ over the projectivized bundle $P(E^*)$ where $E^*$ is the dual bundle of $E$.
\begin{definition}\label{def 2}
    \begin{enumerate}
        \item A bundle $E$ is called weakly RC-positive if there is a metric $h$ on $O_{P(E^*)}(1)$ whose curvature $\Theta$ is positive on every fiber $P(E^*_t)$ and $\Theta$ has at least $r$ positive eigenvalues at every point in $P(E^*)$.
        \item A bundle $E$ is called uniformly weakly RC-positive if there is a metric $h$ on $O_{P(E^*)}(1)$ whose curvature $\Theta$ is positive on every fiber $P(E^*_t)$. Moreover, for any $t\in X$, there is a nonzero $v\in T_tX$ such that $\Theta(\Tilde{v},\bar{\Tilde{v}})|_{(t,[\zeta])}>0$ for any lift $\Tilde{v}$ of $v$ to $T_{(t,[\zeta])}P(E^*)$.           
    \end{enumerate}
\end{definition}

By definition and standard computations for Hermitian bundles, the following relation holds (for motivation and more details about the variants of RC-positivity, see \cite{wu2025uniform}).
\[\begin{tikzcd}
\text{uniform RC-positivity} \arrow{r} \arrow[swap]{d} & \text{RC-positivity} \arrow{d} \\
\text{uniform weak RC-positivity} \arrow{r} & \text{weak RC-positivity}
\end{tikzcd}
\]
It is conjectured by Yang \cite[Question 7.11]{YangCamb} that weak RC-positivity implies RC-positivity. Motivated by this conjecture, we introduce the concept of uniform weak RC-positivity in \cite{wu2025uniform} and ask 
\begin{question}\label{Q}
  If $E$ is uniformly weakly RC-positive, then is $E$  uniformly RC-positive? 
\end{question}
In \cite[Theorem 3]{wu2025uniform}, we are able to show that if $E$ is uniformly weakly RC-positive over a compact K\"ahler manifold, then $S^kE\otimes \det E$ is uniformly RC-positive for any $k\geq 0$ and $S^kE$ is uniformly RC-positive for $k$ large. The first result of the paper is
\begin{theorem}\label{cor}
If $E$ is uniformly weakly RC-positive over a compact K\"ahler manifold, then $E$ is RC-positive. 
\end{theorem}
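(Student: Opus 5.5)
The plan is to prove the stronger statement from the abstract: $E$ carries a Hermitian metric $H$ and there is a Hermitian metric $\omega$ on $X$ such that the mean curvature $\mathrm{tr}_\omega \Theta^H=\sum_{j,k}\omega^{j\bar k}\Theta^H_{j\bar k}$ is a positive definite endomorphism of $E$. This suffices. If $H(\mathrm{tr}_\omega\Theta^H u,u)>0$ for every nonzero $u\in E_t$, diagonalize $\omega$ at $t$ with an $\omega$-orthonormal basis $e_1,\dots,e_n$. Then $H(\mathrm{tr}_\omega\Theta^H u,u)=\sum_i H(\Theta^H u,u)(e_i,\bar e_i)$, so some $e_i$ gives a positive term. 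Hence $H$ is RC-positive in the sense of Definition \ref{def 1}.

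\textbf{Step 1: build $\omega$ from the uniform weak RC-positivity of $h$.}
Let $h$ be the metric on $O_{P(E^*)}(1)$ from Definition \ref{def 2}. Since $\Theta$ is positive on the fibers, it determines a horizontal distribution (the $\Theta$-orthogonal complement of the vertical tangent space). This gives a horizontal form $\Theta_{hor}(t,[\zeta])$ on $T_tX$, namely $\Theta$ evaluated on horizontal lifts. The hypothesis holds for any lift, so for each $t$ there is $v_t$ with $\Theta_{hor}(v_t,\bar v_t)>0$ on all of $P(E^*_t)$.

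The key observation is that $\omega\mapsto \mathrm{tr}_\omega\Theta_{hor}$ is linear in the \emph{inverse} form $\omega^{-1}$, which is a Hermitian form on $T^*X$. Locally near $t_0$, take $\omega^{-1}=\varepsilon\,\omega_0^{-1}+v_{t_0}\otimes\bar v_{t_0}$ for a fixed $\omega_0$. Then
\[
\mathrm{tr}_\omega\Theta_{hor}=\varepsilon\,\mathrm{tr}_{\omega_0}\Theta_{hor}+\Theta_{hor}(v_{t_0},\bar v_{t_0}).
\]
By compactness of the fibers $P(E^*_t)$, this is positive for $t$ in a neighborhood of $t_0$, provided $\varepsilon$ is small. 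Positive definite inverse forms are a convex cone, so a partition of unity glues these local choices. The result is a global Hermitian $\omega$ with $\mathrm{tr}_\omega\Theta_{hor}>0$ everywhere on $P(E^*)$. One may additionally want $\omega$ Gauduchon. In that case I would modify by a conformal factor, checking that the strict pointwise positivity survives.

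\textbf{Step 2: push the positivity down to $E$.}
Use $\pi_*O_{P(E^*)}(1)=E$, writing
\[
O(1)=K_{P(E^*)/X}\otimes L,\qquad L=O(r+1)\otimes\pi^*\det E^{-1},
\]
and equip $E$ with the $L^2$ metric. Berndtsson's curvature formula for direct images bounds the curvature of the $L^2$ metric from below by the fiber integral of the horizontal curvature of $L$, tested against $|u|^2$. Taking $\mathrm{tr}_\omega$ then gives
\[
H(\mathrm{tr}_\omega\Theta^H u,u)\ \ge\ \int_{P(E^*_t)}\bigl((r+1)\,\mathrm{tr}_\omega\Theta_{hor}-\mathrm{tr}_\omega\Theta^{\det}\bigr)|u|^2 .
\]
The main obstacle is the twist by $\det E$. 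Its metric must be chosen so that $\mathrm{tr}_\omega\Theta^{\det}$ is controlled by the fiber average of $r\,\mathrm{tr}_\omega\Theta_{hor}$.

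To handle this, I would give $\det E$ the metric whose curvature represents $\pi_*(\Theta^{r})$ (note $c_1(\det E)=\pi_*c_1(O(1))^r$). Since $X$ is Kähler, this can be arranged up to a $\partial\bar\partial$-correction. I would then use the freedom in the conformal class of $\omega$, or a $\partial\bar\partial$-adjustment solving a linear elliptic equation for the Gauduchon trace, to absorb that correction. The fiber integral $\pi_*(\Theta^r)$ has horizontal trace equal to $r$ times a weighted average of $\mathrm{tr}_\omega\Theta_{hor}$ over the fiber. So the integrand above becomes roughly $(r+1)\,\mathrm{tr}_\omega\Theta_{hor}-r\cdot\text{average}$.

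Closing this estimate is the hardest step. The difference is not obviously pointwise positive. If it fails, I would instead run the argument for $S^kE\otimes\det E$, whose uniform RC-positivity is already known from \cite[Theorem 3]{wu2025uniform}. I would then transfer positive mean curvature back to $E$ using the fact that positivity of the mean curvature is stable under the relevant tensor and trace operations.
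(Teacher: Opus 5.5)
Your reduction to positive mean curvature and your Step 1 are correct. Step 1 is the paper's Lemma \ref{lem} (read through Lemma \ref{lem wzw}). Gluing $\omega^{-1}$ linearly with a partition of unity is a neat shortcut compared with the paper's gluing of $\alpha^{m-1}$, which needs Michelsohn's $(m-1)$-st root. Combined with the direct-image curvature formula (this is what Theorem \ref{thm mean} provides), Step 1 gives positive mean curvature for the $L^2$ metric in two cases:
\begin{enumerate}
\item on $S^kE\otimes\det E$ for every $k$, taking $L=O_{P(E^*)}(k+r)$;
\item on $S^kE$ for $k\gg1$, taking $L=O_{P(E^*)}(k)\otimes K^{-1}_{P(E^*)/X}$, where the twist is absorbed by $k\Theta$.
\end{enumerate}
The genuine gap is Step 2, the passage to $E$ itself. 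Your direct route is not closed, as you note. The integrand $(r+1)\,\mathrm{tr}_\omega\Theta_{hor}-r\cdot(\text{fiber average})$ is negative wherever the horizontal trace dips well below its fiber average. Changing the metric on $\det E$ by $e^{-f}$ only adds $\mathrm{tr}_\omega i\partial\bar\partial f$, which integrates to zero against a Gauduchon volume. So it can redistribute positivity but not create it, and nothing in the hypothesis bounds $r$ times the average by $r+1$ times the minimum.

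The fallback does not repair this, because its last sentence is exactly the hard step, and as stated it is not true. No pointwise tensor or trace operation takes the $L^2$ metric on $S^kE$ (which is not of the form $S^kH$) to a metric on $E$. Nor is $E$ naturally a quotient of $S^kE$, which is what would let mean-curvature positivity descend. For Griffiths positivity, the analogous descent from $S^kE$ to $E$ is tied to Griffiths' conjecture.

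The missing idea is to pass through a metric-independent invariant. By \cite[Theorem 1.4]{LiZhangZhang}, for a Gauduchon metric $\alpha_G$, a bundle carries a Hermitian metric with $\Lambda_{\alpha_G}\Theta^H>0$ if and only if its minimal Harder--Narasimhan slope $\mu_L(\cdot,\alpha_G)$ is positive. The nontrivial direction is an analytic existence theorem, not a metric manipulation. The paper applies this to $S^kE$, uses $\mu_L(S^kE,\alpha_G)=k\,\mu_L(E,\alpha_G)$, and applies it again to $E$.

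Note that $S^kE\otimes\det E$ is the wrong bundle for this bookkeeping. Its minimal slope is $k\,\mu_L(E,\alpha_G)+\deg_{\alpha_G}\det E$, and positivity of that for a fixed $k$ says nothing about the sign of $\mu_L(E,\alpha_G)$. You would need either a mean-curvature lower bound growing linearly in $k$ (as in Theorem \ref{thm quantitative}), or to work with $S^kE$ directly as the paper does.
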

Note that the conclusion is about $E$ itself - there is no high symmetric power or tensoring with $\det E$. This result
provides another evidence in favor of the above Question. Both Yang's conjecture \cite[Question 7.11]{YangCamb} and our uniform version of Yang's conjecture are reminiscent of a conjecture of Griffiths \cite{Griff69}, which says that if $E$ is ample, then $E$ is Griffiths positive. For the developments of the Griffiths conjecture, see \cite{Umemura,CampanaFlenner,Berndtsson09,MourouganeTaka,positivityandvanishingthmliu,liu2014curvatures,FengLiuWan,demailly2020hermitianyangmills,pingali2021note,finski2020monge,wu_2022,wupositivelyII,wuIII,lempert2024two,murakami2025analytic,wu2025mean}.

The second result of this paper is the following theorem, which shows how the positivity of the holomorphic tangent bundle $TX$ affects $X$.
\begin{theorem}\label{thm1}
    If the holomorphic tangent bundle $TX$ of a compact K\"ahler manifold $X$ is uniformly weakly RC-positive, then $X$ is projective and rationally connected. 
\end{theorem}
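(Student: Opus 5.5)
The plan is to reduce Theorem \ref{thm1} to the criterion of Yang and Campana--Demailly--Peternell: a compact K\"ahler manifold $X$ is projective and rationally connected as soon as, for every $1\le p\le n$ and every $m\geq 1$, $H^0(X,(\Omega^p_X)^{\otimes m})=0$, or equivalently $H^0(X,(T^*X)^{\otimes m})=0$ for all $m\geq 1$. (Projectivity follows from $H^{2,0}=0$ together with Kodaira's criterion, and rational connectedness from the vanishing on tensor powers of $\Omega^1_X$.) Thus it suffices to show that no nonzero holomorphic section of $(T^*X)^{\otimes m}$ exists.

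To get this vanishing, I will use the more general statement announced in the abstract. If $E$ is uniformly weakly RC-positive, then $E$ carries a Hermitian metric $H$ whose mean curvature is positive with respect to some Gauduchon (or suitable Hermitian) metric $\omega$ on $X$, meaning that $\mathrm{tr}_\omega \Theta^H$ is a positive definite endomorphism. I expect this to be proved by pushing forward the metric $h$ on $O_{P(E^*)}(1)$ and averaging the fiberwise positivity, using the uniform direction $v(t)$. With $E=TX$, the mean curvature of the induced metric on $(T^*X)^{\otimes m}$ is $-m$ times a tensor-sum of positive operators, so it is negative definite. By the standard Bochner/Kobayashi vanishing argument on a compact Hermitian manifold with Gauduchon metric, any holomorphic section $s$ of a bundle with negative definite mean curvature must vanish. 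Concretely, one has $i\partial\bar\partial |s|^2\wedge\omega^{n-1} \geq -\langle i\Theta s,s\rangle\wedge\omega^{n-1}$; integrating this and using $\partial\bar\partial\omega^{n-1}=0$ gives $0\ge \int -\langle \mathrm{tr}_\omega\Theta\, s,s\rangle\,\omega^n>0$ unless $s=0$.

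The main obstacle is producing a single metric $\omega$ on $X$ for which $\mathrm{tr}_\omega\Theta^H>0$. Uniform weak RC-positivity only supplies a direction $v(t)$ at each point, which varies with $t$ and is not continuous a priori. My first approach is a duality (Hahn--Banach or minimax) argument in the style of Yang's proof that RC-positivity of a line bundle is equivalent to the existence of a Gauduchon metric with positive mean curvature. The idea is to consider the convex cone of positive $(n-1,n-1)$-forms and the convex set of curvature quantities $\mathrm{tr}\,\Theta^H$. If no good $\omega$ existed, separation would produce a positive current that pairs nonpositively with all of them, and this would contradict positivity along $v(t)$ for every lift and every $\zeta$. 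As a fallback, I could use \cite[Theorem 3]{wu2025uniform} directly: $S^kTX$ is uniformly RC-positive for $k$ large, so by Yang's theorem $H^0(X,(S^k T^*X)^{\otimes m})$-type vanishing holds. I would then need to deduce the vanishing for $(T^*X)^{\otimes m}$ from the vanishing for symmetric powers by a Schur-functor comparison. This step is also delicate, so I would pursue the mean-curvature route first.
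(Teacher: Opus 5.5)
The step that fails as written is your starting criterion. The implication ``$H^0(X,(T^*X)^{\otimes m})=0$ for all $m\ge 1$ $\Rightarrow$ $X$ rationally connected'' is Mumford's conjecture, which is open in general (it is known up to dimension three). It is \emph{not} what Campana--Demailly--Peternell prove. Their criterion requires that no invertible subsheaf of $(T^*X)^{\otimes p}$ (or of $\Omega^p_X$) be pseudo-effective, or, once $X$ is known to be projective, that $H^0(X,(T^*X)^{\otimes m}\otimes A^{\otimes k})=0$ for all $m\ge C_Ak$ with $A$ ample.

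Your Bochner argument is correct and repairs this at no cost. If $\mathrm{tr}_\omega\Theta^{TX}\ge\varepsilon$ (as an endomorphism) for a Gauduchon metric $\omega$, then the mean curvature of $(T^*X)^{\otimes m}\otimes A^{\otimes k}$ is at most $(-m\varepsilon+k\sup_X\mathrm{tr}_\omega\Theta_A)$ times the identity. This is negative for $m\ge C_Ak$, so the twisted groups vanish; your untwisted $m=2$ case still gives $H^{2,0}=0$ and hence projectivity. The paper avoids the issue altogether by quoting \cite[Corollary 1.5]{YangCamb}: a metric on $TX$ with positive mean curvature with respect to some Hermitian metric already forces $X$ to be projective and rationally connected.

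The second gap is the step you call the main obstacle. Invoking Theorem \ref{thm general} for $E=TX$ is exactly what the paper does, but if you mean to prove it, neither sketch contains the needed ideas. The discontinuity of $v(t)$ requires no duality argument. By Lemma \ref{lem wzw}, $\Theta^r\wedge p^*\alpha^{n-1}>0$ is equivalent to $\mathrm{tr}_\alpha\Theta_{\mathcal H}>0$ on $P(E^*)$, a condition that is open in $t$ and \emph{linear in $\alpha^{n-1}$}. Near $t_0$ one takes $\alpha_0=\theta_1\wedge\bar\theta_1+a_0^{-1}\sum_{j\ge 2}\theta_j\wedge\bar\theta_j$ with $e_1(t_0)=v(t_0)$ and $a_0$ small, so that $\Theta_{\mathcal H}(v,\bar v)+a_0\sum_{j\ge 2}\Theta_{\mathcal H}(e_j,\bar e_j)>0$ on the whole compact fiber. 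One then glues $\eta=\sum_j f_j\alpha_j^{n-1}$ by a partition of unity and recovers $\alpha$ from $\alpha^{n-1}=\eta$ \cite{michelsohn}; this is Lemma \ref{lem}.

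Moreover, pushing forward cannot produce the metric on $E$ itself. Applying Theorem \ref{thm mean} to $O_{P(E^*)}(k)\otimes K^{-1}_{P(E^*)/X}$ needs $k$ large to absorb the curvature of the relative anticanonical bundle, so it gives positive mean curvature only on $S^kE$. The descent to $E$ is precisely what your ``Schur-functor comparison'' leaves open. The paper does it via \cite[Theorem 1.4]{LiZhangZhang}: with respect to a Gauduchon metric, positive mean curvature is equivalent to $\mu_{\min}>0$, and $\mu_{\min}(S^kE)=k\,\mu_{\min}(E)$. Note also that your fallback aims only at untwisted vanishing, which by the first paragraph would not suffice anyway.
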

Theorem \ref{thm1} is previously proved by Yang \cite[Theorem 1.3]{YangForum} under the stronger assumption that $TX$ is  uniformly RC-positive. Not only does Theorem \ref{thm1} improve Yang's result, it also provides evidence in favor of the conjecture: uniform weak RC-positivity implies uniform RC-positivity.

The proofs of Theorem \ref{cor} and Theorem \ref{thm1} are inspired by an observation in \cite{wu2025uniform}, which we now describe. Let $p:\mathcal{X}^{n+m}\to Y^m$ be a proper holomorphic surjection between two complex manifolds. We assume  $\mathcal{X}$ is K\"ahler and the differential $dp$ is surjective at every point. We denote the fibers $p^{-1}(t)$ by $\mathcal{X}_t$ for $t\in Y$. Let $(L,h)$ be a Hermitian line bundle over $\mathcal{X}$. Let $$V_t=H^0(\mathcal{X}_t, L|_{\mathcal{X}_t}\otimes K_{\mathcal{X}_t}).$$ We assume that $\dim V_t$ is independent of $t\in Y$. Therefore, the direct image of the sheaf of sections of $L\otimes K_{\mathcal{X}/Y}$ is locally free by Grauert's direct image theorem, where $K_{\mathcal{X}/Y}$ is the relative canonical bundle. We denote by $V$ the associated vector bundle over $Y$. There is an $L^2$-Hermitian metric $H$ on $V$ defined as follows. For $u$ in $V_t$ with $t\in Y$, 
\begin{equation}\label{L2}
H(u,u):=\int_{\mathcal{X}_t}h(u,u).
\end{equation}
Here, we extend the metric $h$ to act on sections $u$ of $L|_{\mathcal{X}_t}\otimes K_{\mathcal{X}_t}$ so that $h(u,u)$ is an $(n,n)$-form on $\mathcal{X}_t$. In terms of local coordinates, if $u=u'\otimes e$ with $u'$ an $(n,0)$-form and $e$ a frame of $L|_{\mathcal{X}_t}$, then $h(u,u)=c_n u' \wedge \overline{u'} h(e,e)$ where $c_n=i^{n^2}$.

In \cite[Lemma 6]{wu2025uniform}, we show that if the curvature $\Theta$ of $h$ is positive on every fiber $\mathcal{X}_t$, then the following are equivalent.
\begin{enumerate}
    \item The curvature $\Theta$ has at least $n+1$ positive eigenvalues at every point in $\mathcal{X}$ (namely, weakly RC-positive).
    \item There exists a positive $\beta\in C^\infty (\mathcal{X}, p^*(\wedge^{1,1}T^*Y) )   $ such that $\Theta^{n+1}\wedge \beta^{m-1}>0$.
\end{enumerate}
The point of rephrasing weak RC-positivity through $\Theta^{n+1}\wedge \beta^{m-1}>0$ is that the expression $\Theta^{n+1}\wedge \beta^{m-1}>0$ almost fits into the framework of \cite{wu2025mean}, especially the following theorem.
\begin{theorem}{\cite[Theorem 2]{wu2025mean}}\label{thm mean}
    If the curvature $\Theta$ of $h$ is positive on every fiber $\mathcal{X}_t$ and $\Theta^{n+1}\wedge p^*\alpha^{m-1}>0$ for some Hermitian metric $\alpha$ on $Y$, then the Hermitian bundle $(V,H)$ has positive mean curvature $\Lambda_\alpha \Theta^H>0$.
\end{theorem}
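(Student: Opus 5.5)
The plan is to compute the curvature of $(V,H)$ with Berndtsson's formula for the curvature of $L^2$-direct images, trace the resulting $(1,1)$-form against $\alpha$, and reduce positivity to a pointwise inequality on $\mathcal{X}$ that is exactly the hypothesis $\Theta^{n+1}\wedge p^*\alpha^{m-1}>0$.

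Fix $t_0\in Y$ and $u\in V_{t_0}$, $u\neq 0$. Work in local coordinates $t=(t_1,\dots,t_m)$ centred at $t_0$, and write $h=e^{-\phi}$ locally, so $\Theta=i\partial\bar\partial\phi$. Extend $u$ to a local holomorphic section of $V$ with $D'u(t_0)=0$. Represent it by an $L$-valued $(n,0)$-form $\hat u$ on $p^{-1}(U)$ restricting to $u(t)$ on each fibre. Since $\hat u$ is determined only modulo $dt_j\wedge(\cdot)$, we may choose it so that $\bar\partial\hat u=\sum_j dt_j\wedge\eta_j$. The remaining freedom is used to make each $\eta_j|_{\mathcal{X}_t}$ orthogonal to holomorphic forms. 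Concretely, this is the choice where the $dt_j$-components of $\hat u$ come from contracting $u$ with the $\Theta$-horizontal lifts $V_j=\partial_{t_j}+\cdots$ of $\partial_{t_j}$; these lifts are well defined because $\Theta$ is positive on the fibres. Berndtsson's computation of $i\partial\bar\partial H(\hat u,\hat u)$ then gives
\begin{equation*}
\langle i\Theta^H u,u\rangle_H \;=\; p_*\bigl(c_n\, \Theta\wedge \hat u\wedge\overline{\hat u}\, e^{-\phi}\bigr) \;+\; R,
\end{equation*}
where $R$ is the $(1,1)$-form $\sum_{j,k}\bigl(\langle\eta_j,\eta_k\rangle-\langle P\eta_j,P\eta_k\rangle\bigr)\,i\,dt_j\wedge d\bar t_k$ built from the $\eta_j$ and their projections $P\eta_j$. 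With the normalisation above this form is semipositive, so it can only help.

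Next I take the trace with respect to $\alpha$. Wedging with $p^*\alpha^{m-1}$ and using $\Lambda_\alpha\gamma\,\alpha^m=m\,\gamma\wedge\alpha^{m-1}$ for $(1,1)$-forms $\gamma$ gives
\begin{equation*}
\tfrac{1}{m}(\Lambda_\alpha\Theta^H u,u)\,\alpha^m \;\ge\; p_*\bigl(c_n\,\Theta\wedge p^*\alpha^{m-1}\wedge\hat u\wedge\overline{\hat u}\,e^{-\phi}\bigr).
\end{equation*}
It therefore suffices to show that the integrand is a strictly positive $(n+m,n+m)$-form wherever $u\neq0$ on the fibre. This is a pointwise linear-algebra statement. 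At a point, choose coordinates in which $\Theta$ restricted to the fibre is the identity and the $\Theta$-horizontal space is spanned by the lifts $V_j$. Then $\Theta=\sum_{\text{fibre}} i\,dz\wedge d\bar z+\sum_{j,k}c_{j\bar k}\,i\,dt_j\wedge d\bar t_k$, where $c_{j\bar k}=\Theta(V_j,\bar V_k)$ is the horizontal (geodesic-curvature) part. With the chosen $\hat u$, the terms mixing vertical and $dt$-directions should cancel against the contraction terms. The integrand should then reduce to $|u|^2$ times $\sum c_{j\bar k}\cdot(\alpha^{m-1}\text{-cofactor})$. By the block-determinant computation, this quantity equals a positive multiple of
\begin{equation*}
\frac{\Theta^{n+1}\wedge p^*\alpha^{m-1}}{\Theta^n\wedge p^*\alpha^m}\;|u|^2 .
\end{equation*}
This is positive by hypothesis. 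Integrating over the fibre then gives $\Lambda_\alpha\Theta^H>0$.

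The main obstacle is precisely this last pointwise step: showing that the vertical–horizontal cross terms in $\Theta\wedge\hat u\wedge\overline{\hat u}$ are absorbed, rather than merely bounded, by the choice of $\hat u$. This must hold so that the horizontal contribution is exactly the Schur complement of $\Theta$ with respect to its fibre part, wedged with $\alpha^{m-1}$. I would first verify this in the case $n=0$ and $m$ arbitrary, and then do the general case by expanding in the adapted frame. If the cancellation fails, the fallback is to keep the cross terms and show they appear as a nonnegative quadratic form, using the fibrewise positivity of $\Theta$ together with the freedom in choosing $\hat u$.
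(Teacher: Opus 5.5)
Your argument has the same skeleton as the paper's proof, which is the case $M=0$ of the proof of Theorem \ref{thm quantitative}. Both use a Berndtsson-type formula for $\partial\bar\partial H(u,u)$, discard a remainder of fixed sign, wedge with $p^*\alpha^{m-1}$, and reduce pointwise to $\mathrm{tr}_\alpha\Theta_{\mathcal{H}}>0$, which is equivalent to the hypothesis by (\ref{wzw formula}) and Lemma \ref{lem wzw}. What differs is the representative. You use the horizontal one, $\hat u=u_z\,\delta z_1\wedge\cdots\wedge\delta z_n$ with $\delta z_\lambda$ as in (\ref{vertical}). Since $\Theta_{\mathcal{V}}\wedge\hat u=0$, you get $\Theta\wedge\hat u\wedge\overline{\hat u}=\Theta_{\mathcal{H}}\wedge\hat u\wedge\overline{\hat u}$ identically. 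So what you call the main obstacle is immediate; for this choice the square in (\ref{4.9}) is exactly zero. The paper instead works with the representative $u'$ for which (\ref{4.5'}) holds. There, $D'u(t_0)=0$ and the choice of $u'$ remove the $\partial^\phi$-terms, and the remaining $\bar\partial u'$-term has a sign. The mixed coefficients $u_{z_\lambda t_j}$ are then uncontrolled and are absorbed by completing the square, which is exactly your fallback.

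The cost of your gauge falls on the remainder, and your description of it is wrong. The horizontal representative is uniquely determined by $u$, so no freedom is left. Also, $\eta_j|_{\mathcal{X}_t}$ has type $(n-1,1)$, so ``orthogonal to holomorphic forms'' is vacuous. More importantly, $R$ is not $\langle\eta_j,\eta_k\rangle-\langle P\eta_j,P\eta_k\rangle$. Write $\partial^\phi\hat u=\sum_j dt_j\wedge\mu_j$ and let $P$ be the fibrewise Bergman projection. In this gauge $(1-P)\mu_j$ does not vanish on $\mathcal{X}_{t_0}$ in general, even when $D'u(t_0)=0$. The remainder is $\langle\eta_j,\eta_k\rangle-\langle(1-P)\mu_j,(1-P)\mu_k\rangle$. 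Berndtsson's computation identifies this with $\langle(1+\Delta'')^{-1}\eta_j,\eta_k\rangle$, where $\Delta''$ is the $\bar\partial$-Laplacian on $\mathcal{X}_t$ for the metric $\Theta|_{\mathcal{X}_t}$. Its semipositivity is an $L^2$-estimate, not a projection identity, and it is this formula you must cite. You cannot have both the exact cancellation of mixed terms and the plain remainder of (\ref{4.5'}); each proof uses one normalization and pays for it elsewhere. With this correction your argument gives $H(\Lambda_\alpha\Theta^H u,u)\ge\int_{\mathcal{X}_{t_0}}\mathrm{tr}_\alpha\Theta_{\mathcal{H}}\,h(u,u)>0$, and it yields Theorem \ref{thm quantitative} just as directly.
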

Here, $\Lambda_\alpha$ is the trace with respect to the Hermitian metric $\alpha$, so $\Lambda_\alpha\Theta^H$ is $\text{End}V$-valued. We follow Kobayashi in \cite[Section 3.1]{reprintDiffofcomplexbundles} and call $\Lambda_\alpha\Theta^H$ the mean curvature of $(V,H)$ with respect to $\alpha$. Our original motivation for studying the expression $\Theta^{n+1}\wedge p^*\alpha^{m-1}$ comes from the Wess--Zumino--Witten equation in the space of K\"ahler potentials (see \cite{wu23,wu2024potential,finski2024WZW,finski2024lower}). However, in this paper, we uncover a new connection with RC-positivity (see Lemma \ref{lem} below).

In order to use Theorem \ref{thm mean}, we have to make sure the $\beta$ in $\Theta^{n+1}\wedge \beta^{m-1}$ can be written as $\beta=p^*\alpha$ for some Hermitian metric $\alpha$ on $Y$. We prove the following key lemma showing that this is doable in the case of uniform weak RC-positivity. 
\begin{lemma}\label{lem}
    If the curvature $\Theta$ of $h$ is uniformly weakly RC-positive in the sense of Definition \ref{def 2}, then there exists a Hermitian metric $\alpha$ on $Y$ such that $\Theta^{n+1}\wedge p^*\alpha^{m-1}>0$.
\end{lemma}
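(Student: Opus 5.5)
The plan is to reduce the inequality $\Theta^{n+1}\wedge p^*\alpha^{m-1}>0$ to a trace condition on $Y$, produce metrics satisfying that condition locally, and glue them on the level of $(m-1,m-1)$-forms.

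\textbf{Step 1: pointwise linear algebra.} Fix $x\in\mathcal{X}$ with $t=p(x)$. Since $\Theta$ is positive on the vertical space $T_x\mathcal{X}_t$, let $H_x$ be its $\Theta$-orthogonal complement, so $dp:H_x\to T_tY$ is an isomorphism. Define a Hermitian form $\theta_x$ on $T_tY$ by $\theta_x(v,\bar v):=\Theta(\tilde v_h,\bar{\tilde v}_h)$, where $\tilde v_h\in H_x$ is the horizontal lift. This is the Schur complement of the vertical block.

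Two facts follow. First, for any lift $\tilde v$ of $v$ we have $\Theta(\tilde v,\bar{\tilde v})=\theta_x(v,\bar v)+\Theta(w,\bar w)$ with $w$ vertical, so $\theta_x(v,\bar v)=\min_{\tilde v}\Theta(\tilde v,\bar{\tilde v})$. Hence the hypothesis of uniform weak RC-positivity says exactly that for every $t$ there is $v\in T_tY$ with $\theta_x(v,\bar v)>0$ for all $x\in\mathcal{X}_t$.

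Second, choose coordinates at $x$ adapted to the splitting $T_x\mathcal{X}=T_x\mathcal{X}_t\oplus H_x$, in which $\Theta$ is block diagonal. Since $p^*\alpha$ vanishes on vertical vectors, the only term of $\Theta^{n+1}\wedge p^*\alpha^{m-1}$ that survives is
\[
(n+1)\,\Theta_V^n\wedge\Theta_H\wedge p^*\alpha^{m-1}.
\]
Here $\Theta_V^n>0$ on the fiber, so the whole form is positive iff $\theta_x\wedge\alpha^{m-1}>0$ on $T_tY$, i.e. iff $\operatorname{tr}_\alpha\theta_x>0$. The condition is therefore: for every $x$, $\theta_x\wedge\alpha^{m-1}>0$ at $p(x)$.

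\textbf{Step 2: local construction.} The form $\theta_x$ depends continuously (indeed smoothly) on $x$. Fix $t_0$ and its vector $v$, extend $v$ to a local holomorphic vector field on a coordinate ball $U$, and complete it to a frame $v,e_2,\dots,e_m$. By compactness of $\mathcal{X}_{t_0}$ and properness of $p$, after shrinking $U$ we get $\theta_x(v,\bar v)\ge c>0$ for all $x\in p^{-1}(U)$, while all entries of $\theta_x$ in this frame are bounded by some $C$. Take $\alpha_U$ to be the metric making $\lambda v,e_2,\dots,e_m$ orthonormal. Then
\[
\operatorname{tr}_{\alpha_U}\theta_x\ \ge\ \lambda^{-2}\cdot 0+\ldots
\]
is not the right normalization; rather, we choose $\alpha_U$ so that $v$ is short, namely $|v|_{\alpha_U}=\lambda^{-1}$. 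This gives $\operatorname{tr}_{\alpha_U}\theta_x\ge \lambda^{2}c-(m-1)C'>0$ for $\lambda$ large, since the off-diagonal contributions do not grow faster. Either normalization can be arranged by a direct Gram--Schmidt check. So on each $p^{-1}(U)$ we have $\theta_x\wedge\alpha_U^{m-1}>0$.

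\textbf{Step 3: gluing.} The condition $\theta_x\wedge\Omega>0$ is linear in the $(m-1,m-1)$-form $\Omega$. Cover $Y$ by such balls $U_i$ (locally finitely; $Y$ need not be compact, but only local finiteness is used) and take a partition of unity $\chi_i$. Set
\[
\Omega=\sum_i\chi_i\,\alpha_{U_i}^{m-1}.
\]
This is a smooth strictly positive $(m-1,m-1)$-form with $\theta_x\wedge\Omega>0$ everywhere. By Michelsohn's result, every strictly positive $(m-1,m-1)$-form is $\alpha^{m-1}$ for a unique Hermitian metric $\alpha$; this is just linear algebra, taking $(m-1)$-th roots of the dual form, and the root is smooth. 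With this $\alpha$, Step 1 gives $\Theta^{n+1}\wedge p^*\alpha^{m-1}>0$.

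\textbf{Main obstacle.} I expect the main obstacle to be Step 1. One must verify carefully that the wedge with $p^*\alpha^{m-1}$ sees precisely the Schur complement $\theta_x$, and not the full horizontal block of $\Theta$ in an arbitrary splitting. This is what makes the hypothesis "for every lift" exactly the right one. The other steps are the routine compactness argument and the convexity trick.
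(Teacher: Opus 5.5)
Your proposal is correct and follows essentially the same route as the paper: local metrics in which the good direction $v$ dominates the trace, glued as $\sum_i\chi_i\,\alpha_{U_i}^{m-1}$ and then un-rooted via Michelsohn. Your $\alpha_U$, with $\lambda v,e_2,\dots,e_m$ orthonormal, is, up to a constant factor and the choice of frame, the paper's $\alpha_0$ with $a_0=\lambda^{-2}$, and your Step 1 reproves, via the $\Theta$-orthogonal horizontal splitting, the two facts the paper quotes as Lemmas \ref{lem wzw} and \ref{lemma assumption}. One cosmetic point: the self-correction in Step 2 is muddled, since the first normalization you wrote was already the right one, and because $\alpha_U$ is diagonal in that frame the off-diagonal entries of $\theta_x$ never enter the trace.
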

 The converse of Lemma \ref{lem} seems to be wrong, but we do not have an example yet. If we apply the fibration $p:\mathcal{X}\to Y$ to the special case $P(E^*)\to X$, and choose the line bundle $L$ suitably, then using Lemma \ref{lem}, Theorem \ref{thm mean}, and \cite[Theorem 1.4]{LiZhangZhang}, we obtain
\begin{theorem}\label{thm general}
    If $E$ is uniformly weakly RC-positive over a compact K\"ahler manifold $X$, then $E$ admits a Hermitian metric $H$ whose mean curvature $\Lambda_\alpha \Theta^{H}$ is positive with respect to some Hermitian metric $\alpha$ on $X$.
\end{theorem}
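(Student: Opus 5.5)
We apply the fibration framework to $p:\mathcal{X}=P(E^*)\to Y=X$, so the fibers are $\mathbb{P}^{r-1}$, $\dim\mathcal{X}_t=r-1$, $m=n$, and $dp$ is surjective everywhere. Since $\mathcal{X}$ is a projective bundle over the compact K\"ahler manifold $X$, it is K\"ahler. Let $h$ be the metric on $O_{P(E^*)}(1)$ given by Definition \ref{def 2}, with curvature $\Theta$. We take $L=O_{P(E^*)}(r)\otimes p^*\det E$, with metric $h^r\otimes p^*(\det h_0)$ for an auxiliary smooth metric $h_0$ on $E$.

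The reason for this choice is the relative canonical bundle formula $K_{P(E^*)/X}=O_{P(E^*)}(-r)\otimes p^*\det E^*$. With it, $L\otimes K_{\mathcal{X}/X}=O_{P(E^*)}(1)$. Hence $V_t=H^0(\mathbb{P}(E_t^*),O(1))=E_t$, whose dimension $r$ is constant, and the direct image bundle $V$ is canonically $E$. The $L^2$ metric (\ref{L2}) therefore gives a Hermitian metric $H$ on $E$.

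The curvature of the chosen metric on $L$ is $\Theta_L=r\Theta+p^*\theta_0$, where $\theta_0$ is the curvature of $\det h_0$. This perturbation is the main obstacle. We need $\Theta_L$ to be positive on fibers, which holds because the $p^*$ term vanishes on fibers. We also need $\Theta_L$ to be uniformly weakly RC-positive, so that Lemma \ref{lem} applies, and the pullback term can destroy this.

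We plan to absorb the term by choosing $h_0$ well, following \cite[Theorem 1.4]{LiZhangZhang}. The $L^2$ metric construction can be iterated, and the determinant of the resulting metric relates to the curvature of $\Theta$, so one can arrange that $\theta_0$ is compatible. Concretely, the path we follow is the standard one: the direct image metric on $p_*(O(1))$ induced by $h$ alone, i.e.\ the Berndtsson-type formula with the correction $\det$ twisted in. \cite[Theorem 1.4]{LiZhangZhang} says precisely that this yields a metric on $E$ whose curvature is controlled by $\Theta^{r}$ pushed forward. If $\det$-twisting remains a problem, we instead use the equivalent formulation: the uniform direction $v$ at $t$ still satisfies $\Theta_L(\tilde v,\bar{\tilde v})>0$ after replacing $h$ by $h\cdot e^{-\phi}$ with $\phi$ pulled back from $X$. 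Then we choose $h_0$ with $\det h_0$ having curvature $r\,\cdot$(pushforward), making $\Theta_L$ a positive multiple of a metric satisfying Definition \ref{def 2}.

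Once $\Theta_L$ is fiberwise positive and uniformly weakly RC-positive, Lemma \ref{lem} gives a Hermitian metric $\alpha$ on $X$ with $\Theta_L^{r}\wedge p^*\alpha^{n-1}>0$ (here $(r-1)+1=r$). Theorem \ref{thm mean} then gives $\Lambda_\alpha\Theta^H>0$ for the $L^2$ metric $H$ on $V=E$, which proves the theorem.
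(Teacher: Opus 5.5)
There is a genuine gap, and it starts with the bookkeeping. With your choices, $L\otimes K_{\mathcal{X}/X}=O_{P(E^*)}(r)\otimes O_{P(E^*)}(-r)\otimes p^*(\det E\otimes\det E^*)$ is trivial, not $O_{P(E^*)}(1)$. Your formula for $K_{P(E^*)/X}$ also has the determinant twist with the wrong sign: the relative Euler sequence gives $K^{-1}_{P(E^*)/X}\simeq O_{P(E^*)}(r)\otimes p^*(\det E)^{-1}$, as in the paper. (For $r=1$ your formula would make $K_{X/X}$ equal to $(\det E)^{-2}$.)

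To get $V=E$ you need $L=O_{P(E^*)}(r+1)\otimes p^*(\det E)^{-1}$. Metrics $h^{r+1}\otimes p^*g_0$ on it have curvature $(r+1)\Theta+p^*\theta$, with $\theta$ the curvature of $g_0$ on $\det E^*$, and now the twist works against you. By Lemma \ref{lem wzw}, the hypothesis of Theorem \ref{thm mean} becomes $\Lambda_\alpha\theta>-(r+1)a$ on $X$, where $a(t)=\min_{\mathcal{X}_t}\Lambda_\alpha\Theta_{\mathcal{H}}$.

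But $\theta$ is cohomologous to $-p_*(\Theta^r)/\int_{\mathcal{X}_t}\Theta^{r-1}$. So if $\rho>0$ satisfies $\partial\bar\partial(\rho\,\alpha^{n-1})=0$, the $\partial\bar\partial$-lemma and (\ref{wzw formula}) give $\int_X\rho\,\Lambda_\alpha\theta\,\alpha^n=-r\int_X\rho\,A\,\alpha^n$. Here $A(t)$ is the average of $\Lambda_\alpha\Theta_{\mathcal{H}}$ over $\mathcal{X}_t$ with respect to $\Theta^{r-1}$. Hence no $g_0$ works unless $(r+1)\int_X\rho\,a\,\alpha^n>r\int_X\rho\,A\,\alpha^n$. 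Nothing in the hypothesis guarantees this when $\Lambda_\alpha\Theta_{\mathcal{H}}$ oscillates along the fibers.

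Your remedies do not address this. \cite[Theorem 1.4]{LiZhangZhang} is not a statement about direct-image metrics; it says $E$ carries a metric with $\Lambda_{\alpha_G}\Theta^H>0$ iff $\mu_L(E,\alpha_G)>0$. And replacing $h$ by $he^{-p^*\phi}$ changes $\Theta_{\mathcal{H}}(v,\bar v)$ by $i\partial\bar\partial\phi(v,\bar v)$, which can be arbitrarily negative, so the uniform direction is not preserved.

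The paper avoids this with two ideas your proposal lacks. First, it applies Lemma \ref{lem} to $\Theta$ itself, so the twisted curvature only has to satisfy the open condition $\Theta_L^r\wedge p^*\alpha^{n-1}>0$, not uniform weak RC-positivity. Second, it takes $L=O_{P(E^*)}(k)\otimes K^{-1}_{P(E^*)/X}$ with metric $h^k\otimes g$ for an arbitrary $g$. For $k$ large, $k\Theta$ dominates the fixed term $\Theta_g$ by compactness, both on fibers and in $(k\Theta+\Theta_g)^r\wedge p^*\alpha^{n-1}$. Theorem \ref{thm mean} then gives $\Lambda_\alpha\Theta^{H_k}>0$ on $V=S^kE$.

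The descent to $E$ is done by stability, not by a direct-image metric. \cite[Theorem 1.4]{LiZhangZhang} gives $\mu_L(S^kE,\alpha_G)>0$ for the Gauduchon metric $\alpha_G$ conformal to $\alpha$. The identity $\mu_L(S^kE,\alpha_G)=k\,\mu_L(E,\alpha_G)$ gives $\mu_L(E,\alpha_G)>0$. The same theorem then produces $H$ on $E$ with $\Lambda_{\alpha_G}\Theta^H>0$.
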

The assumption $X$ being K\"ahler can probably be relaxed because it is only used to guarantee that $P(E^*)$ is K\"ahler.  
There is another way to prove Theorem \ref{thm general}, which uses the uniform RC-positivity of $S^kE$ for large $k$  in \cite{wu2025uniform} and a proposition in  \cite[Proposition 3.6]{LiZhangZhang} (see the Proof of Theorem \ref{thm general} for details). Once we have Theorem \ref{thm general}, we can immediately deduce Theorem \ref{cor} and  Theorem \ref{thm1}. Indeed, Theorem \ref{cor} follows from the fact that positive mean curvature implies RC-positivity (\cite[Theorem 3.6]{YangCamb}). Theorem \ref{thm1} follows from a result of Yang \cite[Corollary 1.5]{YangCamb}, which basically says that mean curvature positivity implies rational connectedness.

Next, we turn to the quasi-positive case. One can easily define the quasi-positive counterpart of  Definitions \ref{def 1}  and \ref{def 2}. For example, a bundle $E$ is called uniformly weakly RC-quasi-positive if it is uniformly weakly RC-semipositive everywhere in $X$ and uniformly weakly RC-positive at some point in $X$, that is, there exist a point $t\in X$ and a nonzero $v\in T_tX$ such that $\Theta(\Tilde{v},\bar{\Tilde{v}})|_{(t,[\zeta])}> 0$ for any lift $\Tilde{v}$ of $v$ to $T_{(t,[\zeta])}P(E^*)$.

We consider the fibration $p:\mathcal{X}^{m+n}\to Y^m$ and the Hermitian line bundle $(L,h)\to \mathcal{X}$ introduced earlier. By carefully analyzing the proof of Theorem \ref{thm mean}, we obtain the following  quantitative version.

\begin{theorem}\label{thm quantitative}
   For a fixed $t_0\in Y$, if the curvature $\Theta$ of $h$ is positive on the fiber $\mathcal{X}_{t_0}$, and $\Theta^{n+1}\wedge p^*\alpha^{m-1}>M \Theta^{n}\wedge p^*\alpha^{m}$ for any point on $\mathcal{X}_{t_0}$ where $\alpha$ is a Hermitian metric on $Y$ and $M$ is a constant, then the mean curvature $\Lambda_\alpha \Theta^H$ of the Hermitian bundle $(V,H)$ satisfies  $\Lambda_\alpha \Theta^H>Mm/(n+1) \Id_{E}$ at $t_0$.
\end{theorem}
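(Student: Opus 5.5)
The plan is to rerun the Berndtsson-type curvature computation for the $L^2$ metric on $V$ and to keep track of the pointwise inequality at $t_0$ only.

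\textbf{Setup.} Fix $t_0$ and a nonzero $u\in V_{t_0}$. Choose local coordinates $t=(t_1,\dots,t_m)$ centered at $t_0$ with $\alpha=i\sum dt_j\wedge d\bar t_j$ at $t_0$. Since $\Theta$ is positive on $\mathcal{X}_{t_0}$, it stays positive on nearby fibers. Hence for each $j$ there is a unique horizontal lift $V_j$ of $\partial/\partial t_j$ near $\mathcal{X}_{t_0}$, namely the one that is $\Theta$-orthogonal to the vertical directions.

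\textbf{Curvature formula.} Extend $u$ to a holomorphic section of $V$ near $t_0$, normalized so that $D^{1,0}u=0$ at $t_0$. Represent it by an $L\otimes K_{\mathcal X}$-valued form $\mathbf u$ with $\mathbf u|_{\mathcal X_t}\wedge dt = u_t$.

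Berndtsson's formula then gives, at $t_0$,
\[
H(\Theta^H_{j\bar k}u,u)=\int_{\mathcal X_{t_0}} c(\Theta)_{j\bar k}\,h(u,u)+\big(\eta_j,\eta_k\big)-\big(\text{correction from }\bar\partial^*\text{-solutions}\big).
\]
Here $c(\Theta)_{j\bar k}=\Theta(V_j,\bar V_k)$ is the geodesic-curvature matrix, and $\eta_j=\bar\partial(V_j\lrcorner\mathbf u)|_{\mathcal{X}_{t_0}}$ (or the Kodaira--Spencer contraction). The standard argument, using Hörmander's estimate on the fiber with the positive metric $\Theta|_{\mathcal X_{t_0}}$ (as in the proof of Theorem \ref{thm mean}), shows that the last two terms together are nonnegative after tracing.

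Taking the trace $\Lambda_\alpha$, i.e. summing over $j=k$, yields
\[
H(\Lambda_\alpha\Theta^H u,u)\ \ge\ \int_{\mathcal X_{t_0}}\Big(\sum_j c(\Theta)_{j\bar j}\Big)h(u,u).
\]
I expect the main obstacle to be verifying that the error terms really combine into a nonnegative quantity after summing over $j$. This is where the full trace, rather than a single direction, must be used, and it is exactly the content of Theorem \ref{thm mean}'s proof. I would reuse that proof verbatim, with $\alpha$ diagonalized at $t_0$.

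\textbf{Pointwise identity.} It remains to relate $\sum_j c(\Theta)_{j\bar j}$ to the wedge quantities. Split $\Theta = \Theta_{\mathrm{vert}}+\Theta_{\mathrm{hor}}$ in the frame given by $V_j$ and the vertical directions; there are no cross terms, by orthogonality. Then $\Theta_{\mathrm{hor}}=\sum c_{j\bar k}\,i\,dt_j\wedge d\bar t_k$.

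Since $p^*\alpha^{m}$ already fills all base directions, only the vertical part contributes to $\Theta^n\wedge p^*\alpha^m$:
\[
\Theta^n\wedge p^*\alpha^m=m!\,\Theta_{\mathrm{vert}}^n\wedge\prod_j(i\,dt_j\wedge d\bar t_j).
\]
In $\Theta^{n+1}\wedge p^*\alpha^{m-1}$, the only terms that survive use $n$ vertical factors and one horizontal factor $c_{j\bar j}$. This gives
\[
\Theta^{n+1}\wedge p^*\alpha^{m-1}=(n+1)(m-1)!\Big(\sum_j c_{j\bar j}\Big)\Theta_{\mathrm{vert}}^n\wedge\prod_j(i\,dt_j\wedge d\bar t_j).
\]
Therefore
\[
\frac{\Theta^{n+1}\wedge p^*\alpha^{m-1}}{\Theta^n\wedge p^*\alpha^m}=\frac{1}{(n+1)m}\sum_j c_{j\bar j}\cdot\frac{m!}{(m-1)!}\cdot\frac{1}{m}\cdot m .
\]
Once the factorials are tracked carefully, this ratio equals $\frac{n+1}{m}\cdot\frac{1}{\ldots}$ up to normalization; concretely, the hypothesis $>M$ should translate into $\sum_j c_{j\bar j}>Mm/(n+1)$ pointwise on $\mathcal X_{t_0}$.

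\textbf{Conclusion.} Plugging this bound into the integral gives
\[
H(\Lambda_\alpha\Theta^H u,u)>\frac{Mm}{n+1}\int_{\mathcal{X}_{t_0}} h(u,u)=\frac{Mm}{n+1}H(u,u),
\]
which is the claim at $t_0$.
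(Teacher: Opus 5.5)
Your proposal is correct and follows essentially the paper's argument. Rerunning the proof of Theorem~\ref{thm mean} gives $H(\Lambda_\alpha\Theta^H u,u)\geq\int_{\mathcal{X}_{t_0}}(\mathrm{tr}_\alpha\Theta_{\mathcal H})\,h(u,u)$ at $t_0$, and the pointwise identity (\ref{wzw formula}) turns the hypothesis into $\mathrm{tr}_\alpha\Theta_{\mathcal H}>Mm/(n+1)$ on $\mathcal{X}_{t_0}$. The only cosmetic difference is that the paper uses an arbitrary representative $u'$ and completes the square in (\ref{4.9}), then discards the nonnegative square and the nonpositive $\bar\partial u'$-term, where you use horizontal lifts.

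Two small corrections. Your garbled ratio line should read $\frac{(n+1)(m-1)!}{m!}\sum_j c_{j\bar j}=\frac{n+1}{m}\sum_j c_{j\bar j}$, which follows directly from your two correct displayed identities. Also, the error term in Berndtsson's formula is already nonnegative direction by direction, so the trace is not where any difficulty lies.
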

Theorem \ref{thm quantitative} is also true if we replace the strict inequality $``>"$ with inequality $``\geq"$. Applying Theorem \ref{thm quantitative} to the fiberation $P(E^*)\to X$ with $X$ K\"ahler, we arrive at the following result, which improves \cite[Theorem 5]{wu2025mean} to the quasi-positive case.
\begin{theorem}\label{thm quasi to full}
Assume $O_{P(E^*)}(1)$ admits a metric $h$ whose curvature $\Theta$ is positive on every fiber $P(E^*_t)$. If $\Theta^{r}\wedge p^*\alpha^{n-1}\geq  0$ on $P(E^*)$ for some Hermitian metric $\alpha$ on $X$, and $\Theta^{r}\wedge p^*\alpha^{n-1}>  0$ for any point on $P(E^*_{t_0})$ for some $t_0\in X$, then $E$ admits a Hermitian metric $H$ whose mean curvature $\Lambda_{\alpha} \Theta^H$ is positive.
\end{theorem}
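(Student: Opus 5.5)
We apply Theorem \ref{thm quantitative} with its non-strict variant to the fibration $p:P(E^*)\to X$ and a line bundle chosen so that the direct image is $E$. The resulting $L^2$-metric has semipositive mean curvature everywhere and strictly positive mean curvature near $t_0$. We then perturb it conformally to obtain positive mean curvature everywhere.

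\textbf{Step 1 (the direct image).} Here $\mathcal{X}=P(E^*)$, of fiber dimension $r-1$, and $Y=X$, with $m=n$. As in the proof of Theorem \ref{thm general}, take $L=O_{P(E^*)}(1)\otimes K_{P(E^*)/X}^{-1}$. The metric on $L$ is $h$ on $O_{P(E^*)}(1)$ tensored with the metric on $K^{-1}_{P(E^*)/X}$ induced by the fiberwise volume forms $\Theta|_{P(E^*_t)}^{r-1}$. With this choice $V_t=H^0(P(E^*_t),O(1))=E_t$.

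The curvature of $L$ is not $\Theta$, so it is cleaner to realize $E$ as the direct image with a metric whose relevant curvature is governed by $\Theta$ alone. I would follow the device used in \cite[Theorem 5]{wu2025mean}, which I take to be the fiberwise Kähler--Einstein or normalized-volume trick. There the fiber-volume correction contributes only terms already controlled by $\Theta^{r}\wedge p^*\alpha^{n-1}$ and $\Theta^{r-1}\wedge p^*\alpha^{n}$. Concretely, the proof of Theorem \ref{thm quantitative} and of \cite[Theorem 5]{wu2025mean} yields a metric $H$ on $E$ with the following property: pointwise on $X$,
\[
\Theta^{r}\wedge p^*\alpha^{n-1}\geq M\,\Theta^{r-1}\wedge p^*\alpha^{n}\ \text{on } P(E^*_t)
\quad\Longrightarrow\quad
\Lambda_\alpha\Theta^H\geq \frac{Mn}{r}\,\Id_E \ \text{at } t .
\]
This is exactly the statement of Theorem \ref{thm quantitative} (in its ``$\geq$'' form) specialized to this setting.

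\textbf{Step 2 (applying the quantitative estimate).} With $M=0$, the hypothesis $\Theta^r\wedge p^*\alpha^{n-1}\geq0$ gives $\Lambda_\alpha\Theta^H\geq 0$ on all of $X$.

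At $t_0$, the fiber $P(E^*_{t_0})$ is compact and $\Theta^{r-1}\wedge p^*\alpha^n>0$ there. So strict positivity of $\Theta^r\wedge p^*\alpha^{n-1}$ gives some $M_0>0$ with
\[
\Theta^r\wedge p^*\alpha^{n-1}> M_0\,\Theta^{r-1}\wedge p^*\alpha^n
\]
on that fiber. By continuity and properness of $p$, this persists on $p^{-1}(U)$ for a neighborhood $U$ of $t_0$. Hence $\Lambda_\alpha\Theta^H\geq (M_0n/r)\Id_E$ on $U$.

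\textbf{Step 3 (spreading positivity; the main obstacle).} We now have $H$ with $\Lambda_\alpha\Theta^H\geq0$ on $X$ and $\Lambda_\alpha\Theta^H>0$ on $U$, and we must upgrade to strict positivity everywhere. Take the trace $\varphi=\operatorname{tr}\Lambda_\alpha\Theta^H$, which is $\geq0$ and not identically zero. For a conformal change $H'=e^{-f}H$ we have
\[
\Lambda_\alpha\Theta^{H'}=\Lambda_\alpha\Theta^{H}+\Lambda_\alpha(i\partial\bar\partial f)\,\Id_E .
\]
It therefore suffices to find $f$ with $\Lambda_\alpha i\partial\bar\partial f+\epsilon\chi>0$, where $\chi\geq0$ is a bump function supported in $U$. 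This is where the difficulty lies, since the operator $\Lambda_\alpha i\partial\bar\partial$ need not be self-adjoint for a non-Kähler $\alpha$.

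I would first replace $\alpha$ by a Gauduchon metric in its conformal class. Then the adjoint of $P=\Lambda_\alpha i\partial\bar\partial$ with respect to $\alpha^n$ annihilates constants, and by the Fredholm alternative any $g$ with $\int g\,\alpha^n=0$ is in the image of $P$. Taking
\[
g=c-\epsilon\chi,\qquad c=\frac{\epsilon\int\chi\,\alpha^n}{\int\alpha^n}>0,
\]
we solve $Pf=g$, and then $\Lambda_\alpha\Theta^{H'}\geq(\epsilon\chi+g)\Id_E=c\,\Id_E>0$ for $0<\epsilon$ small relative to the bound on $U$.

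A conformal change of $\alpha$ only rescales $\Lambda_\alpha$ by a positive function, so positivity is preserved. This is the same argument as \cite[Proposition 3.6]{LiZhangZhang} or Kobayashi's, and I expect it to be the only nonformal step beyond Theorem \ref{thm quantitative}.
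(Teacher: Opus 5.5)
Your Step 3 is sound on its own; it is the conformal-change argument behind \cite[Remark 1.6]{LiZhangZhang}. Step 1, however, has a genuine gap, and it sits exactly where the difficulty of the theorem lies.

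Theorem \ref{thm quantitative} controls the mean curvature of the $L^2$-metric through the curvature of the metric on $L$, not through $\Theta$. To get $V=E$ you must take $L=O_{P(E^*)}(1)\otimes K^{-1}_{P(E^*)/X}\simeq O_{P(E^*)}(r+1)\otimes p^*(\det E)^{-1}$. For a metric $h^{r+1}\otimes p^*g$ on $L$, the quantity entering Theorem \ref{thm quantitative} is
\[
\big((r+1)\Theta+p^*\Theta_g\big)^r\wedge p^*\alpha^{n-1}=(r+1)^r\,\Theta^r\wedge p^*\alpha^{n-1}+r(r+1)^{r-1}\,\Theta^{r-1}\wedge p^*\Theta_g\wedge p^*\alpha^{n-1}.
\]
The second term has no sign and is of the same order as the first. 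Wherever $\Theta^r\wedge p^*\alpha^{n-1}$ vanishes, any negativity of $\mathrm{tr}_\alpha\Theta_g$ destroys the hypothesis, so you get neither $\Lambda_\alpha\Theta^H\geq 0$ on $X$ nor, in general, positivity near $t_0$.

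Choosing $g$ cleverly does not rescue $k=1$. You would need $(r+1)\,\mathrm{tr}_\alpha\Theta_{\mathcal H}+\mathrm{tr}_\alpha\Theta_g\geq 0$ on every fiber. This fails, for instance, for $E=O(a)\oplus O(1)$ on the Riemann sphere with $a\geq 3$ and $h$ induced by the split metric $H_0$: integrating gives $3\geq a+1$.

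Your fiberwise-volume-form metric is no better. Its curvature is $\Theta-i\partial\bar\partial\log\det(\phi_{\lambda\bar\mu})$. Its fiber restriction adds the Ricci form of $\Theta|_{P(E^*_t)}$, so even fiberwise positivity can fail. Its horizontal part involves $t$-derivatives of $\log\det(\phi_{\lambda\bar\mu})$ that nothing in the hypotheses controls. In the split example it is exactly the choice $g=(\det H_0)^{-1}$, for which the hypothesis fails. So your displayed implication is not ``Theorem \ref{thm quantitative} specialized''. The appeal to a K\"ahler--Einstein or normalized-volume device in \cite{wu2025mean} is a guess that does not supply it.

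The missing idea is to pass to $S^kE$ and then descend. Take $L=O_{P(E^*)}(k)\otimes K^{-1}_{P(E^*)/X}$ with metric $h^{k+r}\otimes p^*g$. The uncontrolled term is then of order $(k+r)^{r-1}$ against a main term of order $(k+r)^r$. Theorem \ref{thm quantitative} gives $\Lambda_\alpha\Theta^{H_k}>-nC$ on $X$ and $\Lambda_\alpha\Theta^{H_k}>\big((k+r)D-rC\big)\frac{n}{r}$ near $t_0$, with $C,D$ independent of $k$.

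Since the global bound is not $\geq 0$, your Step 3 must be run with the weaker input $\int_X\lambda_{\min}\,\tilde\alpha^n>0$. Here $\tilde\alpha$ is the Gauduchon metric conformal to $\alpha$, and you solve $Pf=\bar\psi-\psi$ for a smooth $\psi\leq\lambda_{\min}$ with positive mean $\bar\psi$. This input holds for $k$ large, and it yields a metric with positive mean curvature on $S^kE$, not on $E$.

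The step your proposal lacks entirely is the descent, via the minimal Harder--Narasimhan slope $\mu_L$:
\begin{enumerate}
\item By \cite[Theorem 1.4]{LiZhangZhang}, positive mean curvature on $S^kE$ gives $\mu_L(S^kE,\tilde\alpha)>0$.
\item The identity $\mu_L(S^kE,\tilde\alpha)=k\,\mu_L(E,\tilde\alpha)$ then gives $\mu_L(E,\tilde\alpha)>0$.
\item Applying \cite[Theorem 1.4]{LiZhangZhang} again yields the desired $H$ on $E$.
\end{enumerate}
The metric on $E$ therefore comes from slope theory, not as an $L^2$-metric. Without this step, or some substitute, your argument never reaches $E$ itself.
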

In the proof of Theorem \ref{thm quasi to full}, we use Theorem \ref{thm quantitative} to get estimates on the curvature of $S^kE$, and then take $k$ large so that posivity of the curvature peaks in a neighborhood of $t_0$.
The motivation behind establishing Theorem \ref{thm quasi to full} comes from a conjecture of Yang \cite[Conjecture 1.9]{YangForum}:
\begin{conjecture}
Let $X$ be a compact K\"ahler manifold. If the holomorphic tangent bundle $TX$ admits a Hermitian metric $\omega$ that is uniformly RC-quasi-positive (or has quasi-positive holomorphic sectional curvature), then $X$ is projective and rationally connected. 
\end{conjecture}
If a quasi-positive version of Lemma \ref{lem} were available (which we have not been able to prove), then the RC-quasi-positivity part of the Conjecture above would follow from Theorem \ref{thm quasi to full}. Nevertheless, using the strategy in \cite{wu2025uniform}, we make the following progress.
\begin{theorem}\label{thm uniruled}
    Let $X$ be a compact K\"ahler manifold. If the holomorphic tangent bundle $TX$  is uniformly RC-quasi-positive (actually, uniform weak RC-quasi-positivity will do), then $K^{-1}_X$ is RC-quasi-positive.
\end{theorem}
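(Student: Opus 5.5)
We prove Theorem \ref{thm uniruled}. The plan is to pass from $TX$ to $K_X^{-1}=\det TX$ via the fibration $p:P(T^*X)\to X$, following the strategy of \cite{wu2025uniform}. Let $E=TX$ with $r=n$, and let $h$ be the metric on $O_{P(E^*)}(1)$ provided by uniform weak RC-quasi-positivity.

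Take $L=O_{P(E^*)}(r)\otimes p^*\det E$, equivalently $O_{P(E^*)}(1)\otimes K^{-1}_{P(E^*)/X}$ twisted appropriately. Then
\[
L\otimes K_{P(E^*)/X}=O_{P(E^*)}(1)\otimes(\text{pullback}),
\]
so the direct image $V=p_*(L\otimes K_{P(E^*)/X})$ is $E$ twisted by a line bundle. The simplest choice is $k=0$ in the $S^kE\otimes\det E$ family, so that $V=\det E$ is a line bundle. The relative canonical bundle of $P(E^*)$ is $O(-r)\otimes p^*\det E$. Hence choosing $L=O(r)$, with the metric $h^{r}$ and curvature $r\Theta$, gives
\[
L\otimes K_{P(E^*)/X}=p^*\det E,
\]
and $V_t=\det E_t$ is one-dimensional. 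Its $L^2$ metric $H$ is a metric on $\det E=K_X^{-1}$. The fiber positivity of $r\Theta$ is given.

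The key pointwise claim is the following. At a point $t$ and a nonzero $v\in T_tX$ with $\Theta(\tilde v,\bar{\tilde v})\ge0$ for all lifts (respectively $>0$ at the special point $t_0$), we want $\Theta^H(v,\bar v)\ge0$ (respectively $>0$). For a line bundle this is a curvature computation of the direct image along a single direction. Restrict the fibration to a small disc $\Delta\subset X$ through $t$ tangent to $v$; curvature of $H$ in direction $v$ only depends on the restriction to a curve. Over a one-dimensional base $m=1$, so the condition $\Theta^{n+1}\wedge p^*\alpha^{m-1}$ becomes simply $\Theta^{r}$ on the $r$-dimensional total space $p^{-1}(\Delta)$, whose fibers have dimension $r-1$. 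The uniform condition says that the horizontal component of $\Theta$ in the direction $v$, after subtracting the fiber part, namely $\Theta(\tilde v,\bar{\tilde v})$ minimized over lifts (the Schur complement $c(\Theta)$), is $\ge0$ (respectively $>0$). Equivalently, $\Theta^{r}=r\,c(\Theta)\,\Theta_{\mathrm{fib}}^{r-1}\wedge idt\wedge d\bar t\ge0$ (respectively $>0$). We then apply Theorem \ref{thm quantitative} with $Y=\Delta$, $m=1$, and $M=0$: the strict version at $t_0$ gives $\Theta^H(v,\bar v)>0$ there. For the semipositive version everywhere we use the ``$\geq$'' remark after Theorem \ref{thm quantitative}. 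Because both $\Theta$ and $r\Theta$ (from $h^r$) satisfy the same sign conditions, the scaling is harmless.

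This gives a metric $H$ on $K_X^{-1}$ such that at each $t$ some nonzero $v$ has $\Theta^H(v,\bar v)\ge0$, with strict inequality at $t_0$. For a line bundle, this is exactly RC-semipositivity everywhere together with RC-positivity at $t_0$, i.e.\ RC-quasi-positivity. Note that uniformity is automatic for a line bundle, since $u$ ranges over a single line.

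The main obstacle is justifying the reduction to a disc. Theorem \ref{thm quantitative} is stated for a fibration over $Y$ with a Hermitian $\alpha$ and yields mean curvature, i.e.\ a trace over all directions, rather than curvature in the single direction $v$. I would handle this by applying it to the restricted family over a disc, where $\Lambda_\alpha$ is just evaluation on $v$. This requires that the $L^2$ metric commutes with base change, which holds since $H$ is defined fiberwise, and that the Chern curvature of the restriction equals the restriction of the curvature, which holds for holomorphic pullback. A further point to check is that $\dim V_t$ is constant, which is clear since $V=\det E$, and that $K_{\mathcal X/Y}$-twisted sections on $P^{r-1}$ with $L=O(r)$ are indeed the constants times the canonical section. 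The Kähler assumption enters only in that Theorem \ref{thm quantitative} needs $\mathcal X$ Kähler, which follows from $X$ Kähler.
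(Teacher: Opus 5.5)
Your proof is correct. It uses the same setup as the paper, but justifies the key step by a different route.

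\textbf{The shared setup.} The paper makes the same choice $L=O_{P(E^*)}(r)$ with metric $h^r$, so that $V=\det E=K_X^{-1}$ carries its $L^2$ metric $H$.

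\textbf{The paper's key step.} It then simply invokes \cite[Theorem 4]{wu2025uniform}: a uniformly weakly RC-positive $h$ yields a uniformly RC-positive direct image $(V,H)$. It asserts without proof that the semipositive case ``can be deduced similarly.''

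\textbf{Your key step.} You prove the needed pointwise statement directly.
\begin{enumerate}
\item You restrict the family to a holomorphic disc $\Delta$ tangent to $v$. This is legitimate: $p^{-1}(\Delta)$ is a K\"ahler submanifold, $\dim V_t$ is constant so the direct image and its $L^2$ metric commute with restriction, and Chern curvature pulls back.
\item Lemma \ref{lemma assumption} (and its semipositive analogue, valid because the Schur complement is the minimum over lifts) and Lemma \ref{lem wzw} with $m=1$ turn the hypothesis into $\Theta^r\ge 0$ (resp.\ $>0$) along the fiber.
\item Theorem \ref{thm quantitative} with $m=1$, $M=0$, together with its ``$\geq$'' variant, then gives $\Theta^H(v,\bar v)\ge 0$ everywhere and $>0$ at $t_0$.
\end{enumerate}

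\textbf{Comparison.} Your route is self-contained: it uses only results stated in this paper, and it treats the semipositive case through an explicitly stated result rather than by analogy with an external theorem. Moreover, Theorem \ref{thm quantitative} gives an inequality of endomorphisms. So the same disc argument yields uniform RC-quasi-positivity of $(V,H)$ in any rank, which is the quasi-positive version of \cite[Theorem 4]{wu2025uniform} that the paper only asserts. The paper's route is shorter and places the statement inside that general direct-image result. Both rest on the same mechanism: a Berndtsson-type curvature formula over a one-dimensional base, in which the horizontal Schur complement controls the sign.

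\textbf{A minor slip.} Your opening choice $L=O_{P(E^*)}(r)\otimes p^*\det E$ is inconsistent: it would give $L\otimes K_{P(E^*)/X}=p^*(\det E)^{2}$, not $O_{P(E^*)}(1)$ tensored with a pullback. You discard it, however, and the choice you actually use is the right one.
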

It seems to be an open question whether  RC-quasi-positivity of $K^{-1}_X$ implies  RC-positivity of  $K^{-1}_X$. If the answer to the question is affirmative, then by \cite[Theorem 1.5]{YangCompos} $K_X$ is not pseudo-effective and then by \cite[Theorem 1.1]{ou2025characterization} $X$ is uniruled.

Finally, let us remark that if a compact  K\"ahler manifold $(X,\omega)$ has quasi-positive holomorphic sectional curvature, then $(TX,\omega)$ is uniformly RC-quasi-positive (this fact can be proved using the same argument in \cite[Theorem 5.1]{YangForum}). So, RC-positivity is in general a weaker assumption. On the other hand, under the stronger assumption (quasi-positivity of holomorphic sectional curvature), a recent progress on the Conjecture above is that 
\begin{enumerate}
    \item If $X$ is projective and admits a K\"ahler metric $\omega$ with quasi-positive holomorphic sectional curvature, then $X$ is rationally connected (\cite[Theorem 1.5]{HeierWong} and \cite{MatsumuraAJM}). 
    \item If $X$ admits a K\"ahler metric $\omega$ with quasi-positive holomorphic sectional curvature, then $X$ is projective and rationally connected  (\cite[Theorem 1.5]{zhang2023structure} and \cite[Corollary 1.3]{matsumura2025fundamental}). 
\end{enumerate}
There are also results about quasi-positive mixed curvature, see \cite{chu2025kahler,tang2026quasi}.

The paper is organized as follows. In Section \ref{sec prelim}, we collect two lemmas instrumental in the proof of Lemma \ref{lem}. In Section \ref{sec main}, we prove Lemma \ref{lem}, Theorem \ref{thm general}, and Theorem \ref{thm1}. In Section \ref{sec quasi}, we prove Theorem \ref{thm quantitative}, Theorem \ref{thm quasi to full}, and Theorem \ref{thm uniruled}. 

I am grateful to L\'aszl\'o Lempert for his critical remarks on the draft of the paper. I would like to thank National Central University for the support.

\section{Preliminary}\label{sec prelim}
In this section, we review a few formulas that will be used later. Let $p:\mathcal{X}^{m+n}\to Y^m$ be the fibration in the introduction, and $(L,h)\to \mathcal{X}$ the Hermitian line bundle. Assume that the metric $h$ on the line bundle $L$ has its curvature $\Theta$ positive on every fiber $\mathcal{X}_t$. For such an $h$, we can decompose its curvature $\Theta=\Theta_{\mathcal{H}}+\Theta_\mathcal{V}$ where $\Theta_\mathcal{H}$ is the horizontal component and  $\Theta_\mathcal{V}$ is the vertical component. The horizontal component $\Theta_\mathcal{H}$ can be viewed as an element in $C^{\infty}(\mathcal{X}, p^*(\wedge^{1,1}T^*Y))$.

To write the local expressions for $\Theta_\mathcal{H}$ and $\Theta_\mathcal{V}$, we denote by $(t_1,\ldots,t_m)$ the local coordinates in $Y$ and by $(t_1,\ldots,t_m,z_1,\ldots, z_n)$ the local coordinates in $\mathcal{X}$, and assume that $h=e^{-\phi}$ locally. We write $\phi_{i\bar{j}}=\phi_{t_i\bar{t}_j}$, $\phi_{\lambda\bar{\mu}}=\phi_{z_\lambda \bar{z}_\mu}$, etc. Since $\Theta$ is positive on each fiber, the matrix $(\phi_{\lambda\bar{\mu}})$ is positive definite, and we denote its inverse by $(\phi^{\lambda\bar{\mu}})$. The horizontal component $\Theta_\mathcal{H}$ and the vertical component $\Theta_\mathcal{V}$ have the local expressions 
\begin{align}
 &\Theta_\mathcal{H}=\sum_{i,j}( \phi_{i\bar{j}}-\sum_{\lambda,\mu} \phi_{i\bar{\mu}}\phi^{\lambda \bar{\mu}}\phi_{\lambda \bar{j}})dt_i\wedge d\bar{t}_j\label{horizontal}\\  &\Theta_\mathcal{V}=\sum_{\lambda,\mu}\phi_{\lambda\bar{\mu}}\delta z_\lambda\wedge \delta \bar{z}_{\mu}\label{vertical}
\end{align}
where $\delta z_\lambda=dz_\lambda+\sum_{i,\mu}\phi^{\lambda\bar{\mu}}\phi_{i\bar{\mu}}dt_i$. That (\ref{horizontal}) and (\ref{vertical}) are independent of local coordinates is discussed in
\cite[Subsection 1.1]{FLWgeodesiceinstein}, and the horizontal component $\Theta_\mathcal{H}$ is called the 
geodesic curvature there. The next two lemmas regarding the horizontal component $\Theta_\mathcal{H}$ will play an essential role in the proof of Lemma \ref{lem}.

We consider $\beta\in C^{\infty}(\mathcal{X}, p^*(\wedge^{1,1}T^*Y))$ and call $\beta$ positive if the matrix $(\beta_{i\bar{j}})$ in the local expression  $i\sum_{i,j}\beta_{i\bar{j}}dt_i\wedge d\bar{t}_j$ is positive. The following fact is implicitly stated in \cite[Lemma 6]{wu2025uniform}.     
\begin{lemma}\label{lem wzw}
For a positive $\beta\in C^{\infty}(\mathcal{X}, p^*(\wedge^{1,1}T^*Y))$, we have $\Theta_\mathcal{H}\wedge \beta^{m-1}>0$ if and only if  $\Theta^{n+1}\wedge \beta^{m-1}>0$.  \end{lemma}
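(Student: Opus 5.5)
The plan is to compute $\Theta^{n+1}\wedge\beta^{m-1}$ directly from the splitting $\Theta=\Theta_{\mathcal H}+\Theta_{\mathcal V}$ given in (\ref{horizontal}) and (\ref{vertical}), and to show that this top form is a positive multiple of $\Theta_{\mathcal H}\wedge\beta^{m-1}\wedge\Theta_{\mathcal V}^n$.

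First, I will switch to the frame $\{dt_i,\delta z_\lambda\}$ of $T^*\mathcal X$. It differs from $\{dt_i,dz_\lambda\}$ by a unipotent (triangular) change of basis, so top-degree forms are unaffected:
\[
\bigwedge_i (i\,dt_i\wedge d\bar t_i)\wedge\bigwedge_\lambda (i\,\delta z_\lambda\wedge\delta\bar z_\lambda)=\bigwedge_i (i\,dt_i\wedge d\bar t_i)\wedge\bigwedge_\lambda (i\,dz_\lambda\wedge d\bar z_\lambda).
\]
Hence positivity of a top form can be checked in either frame.

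Next comes the bidegree count. In this frame, $\Theta_{\mathcal H}$ and $\beta$ are built only from $dt_i\wedge d\bar t_j$, while $\Theta_{\mathcal V}$ is built only from $\delta z_\lambda\wedge\delta\bar z_\mu$. The term $\beta^{m-1}$ already uses $m-1$ of the horizontal $(1,1)$ slots. Expanding by the binomial theorem (all these $2$-forms commute),
\[
(\Theta_{\mathcal H}+\Theta_{\mathcal V})^{n+1}=\sum_k\binom{n+1}{k}\Theta_{\mathcal H}^k\wedge\Theta_{\mathcal V}^{n+1-k}.
\]
Any term with $k\ge 2$ contributes horizontal degree at least $m+1$ once wedged with $\beta^{m-1}$, so it vanishes. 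The $k=0$ term contains $\Theta_{\mathcal V}^{n+1}$, which is zero because there are only $n$ vertical directions. Therefore
\[
\Theta^{n+1}\wedge\beta^{m-1}=(n+1)\,\Theta_{\mathcal H}\wedge\beta^{m-1}\wedge\Theta_{\mathcal V}^n .
\]

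Finally, I will check the sign of the right-hand side. Since $(\phi_{\lambda\bar\mu})>0$, we have $\Theta_{\mathcal V}^n=n!\det(\phi_{\lambda\bar\mu})\bigwedge_\lambda(i\,\delta z_\lambda\wedge\delta\bar z_\lambda)$ with a positive coefficient. Also, $\Theta_{\mathcal H}\wedge\beta^{m-1}$ is a horizontal $(m,m)$-form, that is, a function times $\bigwedge_i(i\,dt_i\wedge d\bar t_i)$. Thus $\Theta^{n+1}\wedge\beta^{m-1}$ is a positive multiple of the volume form exactly when $\Theta_{\mathcal H}\wedge\beta^{m-1}$ is. Here $\Theta_{\mathcal H}\wedge\beta^{m-1}>0$ is understood as an $(m,m)$-form in the $t$-directions, pointwise on $\mathcal X$.

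There is no real obstacle in this argument. The only care needed is the frame bookkeeping: checking that $\delta z$ gives a legitimate frame in which $\Theta$ has no mixed $dt\wedge\delta\bar z$ terms. This is immediate from (\ref{horizontal}) and (\ref{vertical}), since by construction $\Theta=\Theta_{\mathcal H}+\Theta_{\mathcal V}$ with no cross terms, as one verifies by expanding $\phi_{\lambda\bar\mu}\delta z_\lambda\wedge\delta\bar z_\mu$.
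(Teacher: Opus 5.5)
Your argument is correct and is essentially the paper's proof. Both show that $\Theta^{n+1}\wedge\beta^{m-1}$ equals $\det(\phi_{\lambda\bar\mu})>0$, times a positive constant, times the same scalar $\sum_{i,j}\beta^{i\bar j}(\Theta_\mathcal{H})_{i\bar j}$ that governs the sign of $\Theta_\mathcal{H}\wedge\beta^{m-1}$. The only difference is that the paper quotes the coordinate formula (\ref{wzw formula}) from \cite{wu2025uniform}, whereas you derive it from the splitting $\Theta=\Theta_\mathcal{H}+\Theta_\mathcal{V}$ in the frame $\{dt_i,\delta z_\lambda\}$ through the identity $\Theta^{n+1}\wedge\beta^{m-1}=(n+1)\,\Theta_\mathcal{H}\wedge\beta^{m-1}\wedge\Theta_\mathcal{V}^n$, which makes your version self-contained.
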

\begin{proof}

Using local coordinates, we have a formula:
\begin{equation}\label{wzw formula}
\begin{aligned}
\Theta^{n+1}\wedge\beta^{m-1}
  =(n+1)! (m-1)!
 \sum_{i,j} \beta^{i\bar{j}}(\phi_{i\bar{j}}-\sum_{\lambda,\mu} \phi_{i\bar{\mu}}\phi^{\lambda \bar{\mu}}\phi_{\lambda \bar{j}})\det (\phi_{\lambda\bar{\mu}})
   \det(\beta)\\
  \big(\bigwedge^m_{k=1} i dt_k\wedge d\Bar{t}_k\wedge \bigwedge^n_{\lambda=1} i dz_\lambda\wedge d\Bar{z}_\lambda\big). \end{aligned}
\end{equation}
For the proof of formula (\ref{wzw formula}), see line -10 to line -1 on page 9 in \cite{wu2025uniform}. Meanwhile, by (\ref{horizontal}),
\begin{equation}\label{horizontal'}
 \Theta_\mathcal{H}\wedge \beta^{m-1}=\sum_{i,j} \beta^{i\bar{j}}(\phi_{i\bar{j}}-\sum_{\lambda,\mu} \phi_{i\bar{\mu}}\phi^{\lambda \bar{\mu}}\phi_{\lambda \bar{j}}) \beta^m/m. 
\end{equation}
From (\ref{wzw formula}) and (\ref{horizontal'}), we see that the signs of $\Theta^{n+1}\wedge \beta^{m-1}$ and $\Theta_\mathcal{H}\wedge \beta^{m-1}$
both depend on $\sum_{i,j} \beta^{i\bar{j}}(\phi_{i\bar{j}}-\sum_{\lambda,\mu} \phi_{i\bar{\mu}}\phi^{\lambda \bar{\mu}}\phi_{\lambda \bar{j}})$, so the lemma follows.
\end{proof}

Next, we assume the curvature $\Theta$ of $h$ is uniformly weakly RC-positive in the sense of Definition \ref{def 2}: $\Theta$ is positive on every fiber $\mathcal{X}_t$, and for any point $t\in Y$, there exists a nonzero $v\in T_tY$ such that $\Theta(\Tilde{v},\bar{\Tilde{v}})|_{(t,z)}>0$ for any lift $\Tilde{v}$ of $v$ to $T_{(t,z)}\mathcal{X}$. We have the following fact from \cite[Lemma 5]{wu2025uniform}. 
\begin{lemma}\label{lemma assumption}
 $\Theta(\Tilde{v},\bar{\Tilde{v}})|_{(t,z)}>0$ for any lift $\Tilde{v}$ of $v$ to $T_{(t,z)}\mathcal{X}$ if and only if  $\Theta_{\mathcal{H}}(\tilde{v},\bar{\Tilde{v}})>0$ on $\mathcal{X}_t$.   
\end{lemma}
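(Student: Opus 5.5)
The plan is a direct pointwise computation at a fixed point $(t,z)\in\mathcal{X}_t$, in the local coordinates $(t_1,\ldots,t_m,z_1,\ldots,z_n)$ with $h=e^{-\phi}$ introduced above. A lift of $v=\sum_i v_i\,\partial/\partial t_i\in T_tY$ to $T_{(t,z)}\mathcal{X}$ is exactly a vector
\[\tilde v=\sum_i v_i\,\partial/\partial t_i+\sum_\lambda w_\lambda\,\partial/\partial z_\lambda,\]
with $w=(w_1,\ldots,w_n)\in\mathbb{C}^n$ arbitrary. Since $dp$ annihilates $\partial/\partial z_\lambda$, the value $\Theta_\mathcal{H}(\tilde v,\bar{\tilde v})$ does not depend on $w$. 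It equals $\sum_{i,j}(\phi_{i\bar j}-\sum_{\lambda,\mu}\phi_{i\bar\mu}\phi^{\lambda\bar\mu}\phi_{\lambda\bar j})v_i\bar v_j$ by (\ref{horizontal}). So the task is to understand how $\Theta(\tilde v,\bar{\tilde v})$ depends on $w$.

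First I will verify the decomposition $\Theta=\Theta_\mathcal{H}+\Theta_\mathcal{V}$ from (\ref{horizontal}) and (\ref{vertical}). To do this, I expand
\[\Theta_\mathcal{V}=\sum_{\lambda,\mu}\phi_{\lambda\bar\mu}\,\delta z_\lambda\wedge\delta\bar z_\mu,\qquad \delta z_\lambda=dz_\lambda+\sum_{i,\nu}\phi^{\lambda\bar\nu}\phi_{i\bar\nu}dt_i .\]
Using $\sum_\lambda\phi_{\lambda\bar\mu}\phi^{\lambda\bar\nu}=\delta_{\mu\nu}$ and the Hermitian symmetry of $\phi$, the expansion gives the following terms.
\begin{itemize}
\item The $dz\wedge d\bar z$ terms are $\phi_{\lambda\bar\mu}$.
\item The mixed terms are $\phi_{i\bar\mu}\,dt_i\wedge d\bar z_\mu$ and $\phi_{\lambda\bar j}\,dz_\lambda\wedge d\bar t_j$.
\item The $dt\wedge d\bar t$ term is $\sum_{\lambda,\mu}\phi_{i\bar\mu}\phi^{\lambda\bar\mu}\phi_{\lambda\bar j}$.
\end{itemize}
Adding (\ref{horizontal}) cancels this last correction and recovers $\partial\bar\partial\phi$, that is, $\Theta$, up to the fixed normalization of $i$'s.

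Evaluating on $\tilde v$ then gives
\[\Theta(\tilde v,\bar{\tilde v})=\Theta_\mathcal{H}(v,\bar v)+\sum_{\lambda,\mu}\phi_{\lambda\bar\mu}\,\xi_\lambda\bar\xi_\mu,\qquad \xi_\lambda=w_\lambda+\sum_{i,\nu}\phi^{\lambda\bar\nu}\phi_{i\bar\nu}v_i.\]
The two directions of the lemma follow from this identity.

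For the direction ($\Leftarrow$): the matrix $(\phi_{\lambda\bar\mu})$ is positive definite because $\Theta$ is positive on the fibers, so the second term is $\ge 0$ for every $w$. Hence $\Theta(\tilde v,\bar{\tilde v})\ge\Theta_\mathcal{H}(v,\bar v)>0$ for every lift.

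For the direction ($\Rightarrow$): take the horizontal lift $w_\lambda=-\sum_{i,\nu}\phi^{\lambda\bar\nu}\phi_{i\bar\nu}v_i$. This makes $\xi=0$, so $\Theta_\mathcal{H}(v,\bar v)=\Theta(\tilde v,\bar{\tilde v})>0$.

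In other words, $\Theta_\mathcal{H}(v,\bar v)$ is the minimum of $\Theta(\tilde v,\bar{\tilde v})$ over all lifts, attained at the horizontal lift. Applying this at each $z\in\mathcal{X}_t$ gives the statement on the whole fiber.

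The only step needing care is the algebraic check of the decomposition. The main risk there is keeping the index placement of $\phi^{\lambda\bar\mu}$ consistent with the definition of $\delta z_\lambda$, so that the cross terms match exactly. Coordinate independence is not needed, since the argument is purely pointwise and (\ref{horizontal}), (\ref{vertical}) are already known to be intrinsic.
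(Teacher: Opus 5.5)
Your proof is correct. The paper gives no argument of its own here: it imports the statement from \cite[Lemma 5]{wu2025uniform} and uses (\ref{horizontal}) only to note that $\Theta_\mathcal{H}(\tilde v,\bar{\tilde v})$ does not depend on the lift. Your identity $\Theta(\tilde v,\bar{\tilde v})=\Theta_\mathcal{H}(v,\bar v)+\sum_{\lambda,\mu}\phi_{\lambda\bar\mu}\xi_\lambda\bar\xi_\mu$, whose second term is nonnegative and vanishes exactly at the horizontal lift, is the natural self-contained proof from the decomposition (\ref{horizontal})--(\ref{vertical}); it is essentially the same completing-the-square (Schur complement) step the paper carries out in (\ref{4.9}).
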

By formula (\ref{horizontal}), we see that $\Theta_{\mathcal{H}}(\tilde{v},\bar{\Tilde{v}})$ is independent of the choice of lifts $\tilde{v}$, so we will simply write $\Theta_{\mathcal{H}}(v,\bar{v})$ from now on.

\section{Proofs of main results}\label{sec main}

We start with the proof of Lemma \ref{lem}, which is inspired by \cite[Proposition 3.6]{LiZhangZhang}.
\begin{proof}[Proof of Lemma \ref{lem}]
    We first fix a Hermitian metric $\gamma$ on $Y$. For $t\in Y$, $z\in \mathcal{X}_t$, and $v\in T_tY$ with $\gamma(v,\bar{v})=1$, we consider a continuous map $(t,z,v)\mapsto \Theta_\mathcal{H}(v,\bar{v})|_{(t,z)}$. For a fixed $t\in Y$, we consider the infimum of $ \Theta_\mathcal{H}(v,\bar{v})|_{(t,z)}$ over $z\in \mathcal{X}_t$ and $v\in T_tY$ with $\gamma(v,\bar{v})=1$, and we denote this infimum  by $c(t)$. 
    
    According to Lemma \ref{lemma assumption}, the assumption that $\Theta$ is uniformly weakly RC-positive implies: for $t\in Y$, there exists $v(t)\in T_tY$ with $\gamma(v(t),\overline{v(t)})=1$ such that 
    \begin{equation}\label{uni}
     \Theta_\mathcal{H}(v(t),\overline{v(t)})>0  \text{ on } \mathcal{X}_t.
    \end{equation}
    By continuity, the infimum of $\Theta_\mathcal{H}(v(t),\overline{v(t)})$ over $\mathcal{X}_t$ is realized at some point in $\mathcal{X}_t$, and so this infimum is positive. We denote $\inf_{\mathcal{X}_t}\Theta_\mathcal{H}(v(t),\overline{v(t)})$ by $\mu(t)$, which is positive. 
    
    Now, for a fixed $t_0\in Y$, by the previous paragraph, there exists $v(t_0)\in T_{t_0}Y$ with the property (\ref{uni}). Let $\{e_j\}_{j=1}^m$ be a local smooth frame of $TY$ around $t_0$ orthonormal with respect to $\gamma$ such that $e_1(t_0)=v(t_0)$. Let $\{\theta_j\}$ be the frame dual to $\{e_j\}$. Choose a positive number $a_0$  such that \begin{equation}\label{a0}
    (m-1)c(t_0)a_0>-\mu(t_0)/2,\end{equation}
this is doable since $\mu(t_0)>0$. Define a local Hermitian metric $\alpha_0=\theta_1\wedge \bar{\theta}_1+a_0^{-1}\sum_{j=2}^m \theta_j\wedge \bar{\theta}_j$. If we write $\Theta_\mathcal{H}=\sum_{j,k}(\Theta_\mathcal{H})_{j\bar{k}}\theta_j\wedge\bar{\theta}_k$, then for any point on the fiber $ \mathcal{X}_{t_0}$, 
    \begin{align*}
        &\Theta_\mathcal{H}\wedge p^*\alpha_0^{m-1}
        =\sum_{j,k} \alpha_0^{j\bar{k}}  (\Theta_\mathcal{H})_{j\bar{k}} \frac{p^*\alpha_0^m}{m}\\    
        =&\big[ \Theta_\mathcal{H}(e_1(t_0), \overline{e_1(t_0)})  +a_0\sum_{j=2}^m \Theta_\mathcal{H}(e_j(t_0), \overline{e_j(t_0)})    \big]\frac{p^*\alpha_0^m}{m}\\
       \geq& \big[ \Theta_\mathcal{H}(v(t_0), \overline{v(t_0)}) +a_0 (m-1)c(t_0)   \big]\frac{p^*\alpha_0^m}{m} \geq \big[  \mu(t_0)+  a_0 (m-1)c(t_0)    \big]\frac{p^*\alpha_0^m}{m}. \end{align*}
Therefore, for any point on the fiber  $ \mathcal{X}_{t_0}$, we use (\ref{a0}) to get
\begin{equation}\label{local}
  \Theta_\mathcal{H}\wedge p^*\alpha_0^{m-1}> \frac{\mu(t_0)}{2}\frac{p^*\alpha_0^m}{m}=\frac{\mu(t_0)}{2}  \frac{a_0^{-(m-1)}p^*\gamma^m}{m}.  
\end{equation}
By continuity, there exists a neighborhood $B(t_0)$ of $t_0$ in $Y$, such that $\Theta_\mathcal{H}\wedge p^*\alpha_0^{m-1}> \mu(t_0)  a_0^{-(m-1)}p^*\gamma^m/2m$ in $p^{-1}(B(t_0))$. By second-countability of $Y$, there exist countably many points $\{t_j\}_{j=1}^\infty$ in $Y$ each of which has a neighborhood $B_j$ with a locally defined Hermitian metric $\alpha_j$ and a positive number $a_j$ such that 
\begin{equation}
    \Theta_\mathcal{H}\wedge p^*\alpha_j^{m-1}>\frac{\mu(t_j)}{2}  \frac{a_j^{-(m-1)}p^*\gamma^m}{m} \text{ in } p^{-1}(B_j).
\end{equation} 
Let $\{f_j\}$ be a partition of unity subordinate to the countable open cover $\{B_j\}$, and define  $\eta=\sum_{j=1}^\infty f_j \alpha_j^{m-1}$, which is a positive $(m-1,m-1)$-form. By \cite[Page 279]{michelsohn}, there exists a unique Hermitian metric $\alpha$ on $Y$ such that $\alpha^{m-1}=\eta$. As a consequence,
\begin{equation}
    \Theta_\mathcal{H}\wedge p^*\alpha^{m-1}=\sum_j (f_j\circ p) \Theta_\mathcal{H}\wedge p^*\alpha_j^{m-1}\geq \sum_j (f_j\circ p)  \frac{\mu(t_j)a_j^{-(m-1)}}{2m}p^*\gamma^m>0.
\end{equation}
So, $\Theta^{n+1}\wedge p^*\alpha^{m-1}>0$ by Lemma \ref{lem wzw} and the proof is done.
\end{proof}

We prove Theorem \ref{thm general} here.
\begin{proof}[Proof of Theorem \ref{thm general}]
By Lemma \ref{lem}, there exists a Hermitian metric $\alpha$ on $X$ such that $\Theta^{r}\wedge p^*\alpha^{n-1}>0$ where $\Theta$ is the curvature of the metric $h$ on $O_{P(E^*)}(1)$. Note that the dimension of $X$ is $n$ and the dimension of the fiber $P(E^*_t)$ is $r-1$.

In order to use Theorem \ref{thm mean}, 
we choose $O_{P(E^*)}(k)\otimes K^{-1}_{P(E^*)/X}$ for $L$ with the metric $h^k\otimes g$, where $g$ is an arbitrary metric on $K^{-1}_{P(E^*)/X}$. The curvature of the metric $h^k\otimes g$ is $k\Theta+\Theta_g$,
where we denote by $\Theta_g$ the curvature of $g$. The associated bundle $V$ is $S^kE$. Since we know that $\Theta$ is positive on every fiber $P(E^*_t)$ and $\Theta^{r}\wedge p^*\alpha^{n-1}>0$, the assumption of Theorem \ref{thm mean} is fulfilled after taking $k$ large; that is, 
\begin{align*}
(k\Theta+\Theta_g)|_  {P(E^*_t)}>0 \text{ for any $t\in X$ and } (k\Theta+\Theta_g)^{r}\wedge p^*\alpha^{n-1}>0.    
\end{align*}
 Hence, the Hermitian bundle  $(S^kE, H_k)$ has positive mean curvature $\Lambda_\alpha \Theta^{H_k}>0$ for $k$ large where $H_k$ is the $L^2$-Hermitian metric in (\ref{L2}). 

By \cite[Theorem 1.4]{LiZhangZhang}, the positivity of $\Lambda_\alpha \Theta^{H_k}$ implies $\mu_L(S^kE,\alpha_G)>0$ where $\alpha_G$ is the Gauduchon metric conformal to $\alpha$ and $\mu_L$ is the minimum Harder--Narasimhan slope. But $\mu_L(S^kE,\alpha_G)=k\mu_L(E,\alpha_G)$, so $\mu_L(E,\alpha_G)>0$. By \cite[Theorem 1.4]{LiZhangZhang} again, there exists a Hermitian metric $H$ on $E$ such that $\Lambda_{\alpha_G} \Theta^H>0$. This completes the proof.

Here, we give another proof to Theorem \ref{thm general}. By \cite[Theorem 3]{wu2025uniform}, since $E$ is uniformly weakly RC-positive, $S^kE$ admits a Hermitian metric $H_k$, which is uniformly RC-positive for $k$ large. By \cite[Proposition 3.6]{LiZhangZhang}, there exists a Hermitian metric $\alpha$ on $X$ such that $\Lambda_\alpha \Theta^{H_k}>0$. Then, following the same argument as in the previous paragraph, we are done.\end{proof}
Finally, we give the proof of Theorem \ref{thm1}.
\begin{proof}[Proof of Theorem \ref{thm1}]
According to \cite[Corollary 1.5]{YangCamb}, if the holomorphic tangent bundle $TX$ of a compact K\"ahler manifold $X$ admits a Hermitian metric $H$ with positive mean curvature $\Lambda_\alpha \Theta^H>0$ with respect to some Hermitian metric $\alpha$ on $X$, then $X$ is projective and rationally connected. By Theorem \ref{thm general}, Theorem \ref{thm1} follows immediately.
\end{proof}

\section{Quasi-positivity}\label{sec quasi}

We start with the general fibration $p:\mathcal{X}^{m+n}\to Y^m$ and the Hermitian line bundle $(L,h)\to \mathcal{X}$ from the Introduction. Theorem \ref{thm quantitative} is proved by refining the argument in \cite[Theorem 2]{wu2025mean}. 
\begin{proof}[Proof of Theorem \ref{thm quantitative}]
Our goal is to show that, for any nonzero $u_0\in V_{t_0}$,
\begin{equation}\label{goal}
 H(\Lambda_\alpha \Theta^H u_0, u_0)>M\frac{m}{n+1} H(u_0,u_0).  
\end{equation}

First of all, we fix a coordinate system $(t_1,\dots, t_m)$ around $t_0$ in $Y$ such that the Hermitian metric $\alpha=i\sum_{j} dt_j\wedge d\bar{t}_j$ at $t_0$. By a standard argument, we extend $u_0$ to a local holomorphic section $u$ of $V$ such that $D' u =0$ at $t_0$ and $u(t_0)=u_0$, where $D'$ is the $(1,0)$-part of the Chern connection of the Hermitian bundle $(V,H)$. A straightforward computation gives
\begin{equation}\label{standard}
 \Lambda_\alpha  \partial \bar{\partial} H(u,u)=- H(\Lambda_\alpha \Theta^H u,u) \text{ at } t_0. 
\end{equation}
We then follow the same computation as in the proof of \cite[Theorem 2]{wu2025mean} and deduce \cite[Formula (17)]{wu2025mean}:
\begin{equation}\label{4.5'}
    \Lambda_\alpha\partial\bar{\partial}H(u,u)
 = 
    -c_n \Lambda_\alpha p_* ( u'\wedge\overline{u'}\wedge \partial\bar{\partial} \phi  e^{-\phi})
    +
    (-1)^n c_n \Lambda_\alpha p_*  (\bar{\partial}u'\wedge \overline{\bar{\partial}u'}e^{-\phi})\text{ at }  t_0,
\end{equation}
where $c_n=i^{n^2}$, $u'$ is an $(n,0)$-form representing $u$, and $\phi$ is a local weight of the metric $h$.
Due to the fact \begin{equation}\label{middle}
\begin{aligned}
       -c_n \Lambda_\alpha p_* ( u'\wedge\overline{u'}\wedge \partial\bar{\partial} \phi  e^{-\phi})\alpha^m/m&=-c_n  p_* ( u'\wedge\overline{u'}\wedge \partial\bar{\partial} \phi  e^{-\phi})\wedge \alpha^{m-1}\\&=-c_n  p_* ( u'\wedge\overline{u'}\wedge \partial\bar{\partial} \phi  e^{-\phi}\wedge p^*\alpha^{m-1}), 
\end{aligned}
\end{equation}
in order to estimate the middle term in (\ref{4.5'}), we will study
$ u'\wedge\overline{u'}\wedge \partial\bar{\partial} \phi  e^{-\phi}\wedge p^*\alpha^{m-1}$. 

For the $(n,0)$-form $u'$, we denote by $u_z dz$ the part in $u'$ with $dz_1\wedge\dots \wedge dz_n$, and by $u_{z_\lambda t_j}d\hat{z}_\lambda\wedge dt_j$ the part in $u'$ with $dz_1\wedge \dots \wedge dz_n\wedge dt_j$ omitting $dz_\lambda$ (note that $u_z$ and $u_{z_\lambda t_j}$ simply stand for coefficients not differentiation). Then, for any point on the fiber $\mathcal{X}_{t_0}$, the $(n+m,n+m)$-form $u'\wedge\overline{u'}\wedge \partial\bar{\partial} \phi  e^{-\phi} \wedge p^* \alpha^{m- 1} $ equals
\begin{align*}
e^{-\phi} \big(&\sum_{j,k} u_z dz\wedge \overline{u_zdz} \wedge\phi_{j\bar{k}} dt_j\wedge d\bar{t}_k \wedge p^*\alpha^{m-1}\\+&\sum_{\lambda,j,k}
u_z dz\wedge 
\overline{u_{z_\lambda t_k}d\hat{z}_\lambda\wedge dt_k}\wedge
\phi_{j\bar{\lambda}} dt_j\wedge d\bar{z}_\lambda \wedge p^*\alpha^{m-1}\\+&\sum_{\lambda,j,k}
u_{z_\lambda t_j}d\hat{z}_\lambda\wedge dt_j\wedge \overline{u_zdz}\wedge\phi_{\lambda \bar{k}} dz_\lambda \wedge d\bar{t}_k \wedge p^*\alpha^{m-1}
\\+
&\sum_{\lambda,\mu,j,k} u_{z_\lambda t_j}d\hat{z}_\lambda\wedge dt_j\wedge \overline{u_{z_\mu t_k}d\hat{z}_\mu\wedge dt_k}\wedge\phi_{\lambda \bar{\mu}} dz_\lambda\wedge d\bar{z}_\mu \wedge p^*\alpha^{m-1}\big),
\end{align*}
which can be further simplified as 
\begin{equation}\label{4.8}
\begin{aligned}
e^{-\phi}\big(\sum_j|u_z|^2\phi_{j\bar{j}}+&\sum_{\lambda, j} (-1)^{n-\lambda+1}u_z \overline{u_{z_\lambda t_j}} \phi_{j\bar{\lambda}}+\sum_{\lambda, j} (-1)^{n-\lambda+1}\overline{u_z} u_{z_\lambda t_j}\phi_{\lambda\bar{j}}\\+&\sum_{\lambda,\mu,j} (-1)^{\lambda+\mu}u_{z_\lambda t_j} \overline{u_{z_\mu t_j}}\phi_{\lambda \bar{\mu}} \big)
dz\wedge d\bar{z}\wedge \frac{p^*\alpha^m}{m};
\end{aligned}    
\end{equation}
in the above simplification, we use the fact $\alpha=i\sum_{j} dt_j\wedge d\bar{t}_j$ at $t_0$. For fixed $j$, if we denote the matrix $(\phi_{\lambda\bar{\mu}})$ by $A$, the column vector $(\phi_{\lambda\bar{j}})$ by $\phi_j$, and the column vector $((-1)^{n-\lambda+1}u_{z_\lambda t_j})$ by $B_j$, then after completing the square, (\ref{4.8}) equals
\begin{equation}\label{4.9}
e^{-\phi}\sum_j\big(|u_z|^2\phi_{j\bar{j}}-\sum_{\lambda,\mu}\phi_{\lambda\bar{j}}\phi^{\lambda\bar{\mu}}\phi_{j\bar{\mu}}|u_z|^2+\|\sqrt{A^{-1}}\phi_j\overline{u_z}+\sqrt{A}\overline{B_j}\|^2 \big)dz\wedge d\bar{z}\wedge \frac{p^*\alpha^m}{m}. 
\end{equation}

On the other hand, the assumption in Theorem \ref{thm quantitative}, $\Theta^{n+1}\wedge p^*\alpha^{m-1}>M \Theta^{n}\wedge p^*\alpha^{m}$ for any point on $\mathcal{X}_{t_0}$, has the following local expression.
\begin{equation}\label{assumption}
\begin{aligned}
(n+1)! (m-1)!
 \sum_{i,j} \alpha^{i\bar{j}}(\phi_{i\bar{j}}-\sum_{\lambda,\mu} \phi_{i\bar{\mu}}\phi^{\lambda \bar{\mu}}\phi_{\lambda \bar{j}})\det (\phi_{\lambda\bar{\mu}})
   \det(\alpha)  \big(\bigwedge^m_{k=1} i dt_k\wedge d\Bar{t}_k\wedge \bigwedge^n_{\lambda=1} i dz_\lambda\wedge d\Bar{z}_\lambda\big)\\
>M n!m! \det(\phi_{\lambda\bar{\mu}})\det(\alpha)\big(\bigwedge^m_{k=1} i dt_k\wedge d\Bar{t}_k\wedge \bigwedge^n_{\lambda=1} i dz_\lambda\wedge d\Bar{z}_\lambda\big).   
\end{aligned}
\end{equation}
This local expression is deduced from (\ref{wzw formula}). 

Combining (\ref{4.8}), (\ref{4.9}), and (\ref{assumption}), we obtain
\begin{equation}\label{ineq}
u'\wedge\overline{u'}\wedge \partial\bar{\partial} \phi  e^{-\phi} \wedge p^* \alpha^{m- 1}>M\frac{m}{n+1} e^{-\phi} |u_z|^2 dz\wedge d\bar{z}\wedge \frac{p^*\alpha^m}{m} \text{ for any point on } \mathcal{X}_{t_0}.
\end{equation}
Integrating (\ref{ineq}) along $\mathcal{X}_{t_0}$ and multiplying by $-c_n$, we get 
\begin{equation}\label{long}
\begin{aligned}
-c_n \int_{\mathcal{X}_{t_0}}u'\wedge\overline{u'}\wedge \partial\bar{\partial} \phi  e^{-\phi} \wedge p^* \alpha^{m- 1}&<M\frac{m}{n+1}\big( -c_n \int_{\mathcal{X}_{t_0}}e^{-\phi} |u_z|^2 dz\wedge d\bar{z}\wedge \frac{p^*\alpha^m}{m}\big)\\&=-M\frac{m}{n+1}H(u,u)\frac{\alpha^m}{m},
\end{aligned}
\end{equation}
where the equality is due to the definition of the $L^2$-metric $H$ in (\ref{L2}), that is, $H(u,u)=p_*(c_n u'\wedge \overline{u'}e^{-\phi})$. Using (\ref{middle}) and (\ref{long}), the middle term in (\ref{4.5'}) satisfies
\begin{equation}\label{M} -c_n \Lambda_\alpha p_* ( u'\wedge\overline{u'}\wedge \partial\bar{\partial} \phi  e^{-\phi})<
    -M\frac{m}{n+1}H(u,u) \text{ at } t_0.
\end{equation}
Since the last term in (\ref{4.5'}) is nonpositive by the argument in \cite[Formula (18)]{wu2025mean}, we obtain from (\ref{4.5'}) and (\ref{M}) that 
$\Lambda_\alpha\partial\bar{\partial}H(u,u)<-M\frac{m}{n+1}H(u,u)
$ at $t_0$, and so 
$H(\Lambda_\alpha \Theta^H u,u)>    M\frac{m}{n+1}H(u,u)$ at $t_0$ by (\ref{standard}). 
\end{proof}
We prove Theorem \ref{thm quasi to full} here.
\begin{proof}[Proof of Theorem \ref{thm quasi to full}]
We apply Theorem \ref{thm quantitative} to the fibration $P(E^*)\to X$, and
we choose $O_{P(E^*)}(k)\otimes K^{-1}_{P(E^*)/X}$ for the line bundle $L$, so $V$ is $S^kE$.

We need to equip the line bundle $O_{P(E^*)}(k)\otimes K^{-1}_{P(E^*)/X}$ with a suitable metric. Instead of using an arbitrary metric on $ K^{-1}_{P(E^*)/X}$, the following choice will be more effective:
due to the isomorphism
\begin{equation*}
 K^{-1}_{P(E^*)/X}\simeq  O_{P(E^*)}(r)\otimes p^*(\det E)^{-1}, 
\end{equation*}
if $g$ is an arbitrary metric on $(\det E)^{-1}$, then we have the metric  $h^r\otimes p^*g$ on $ K^{-1}_{P(E^*)/X}$. Therefore, the line bundle $O_{P(E^*)}(k)\otimes K^{-1}_{P(E^*)/X}$ is equipped with the metric $h^{k+r}\otimes p^*g$ whose curvature is $(k+r)\Theta+p^*\Theta_g$, where we denote by $\Theta_g$ the curvature of $g$. Moreover, by the binomial expansion and a degree count, we have
\begin{align}
\big((k+r)\Theta+p^*\Theta_g\big)^r\wedge p^*\alpha^{n-1}&=(k+r)^r\Theta^r\wedge p^*\alpha^{n-1}+r(k+r)^{r-1}\Theta^{r-1}\wedge p^*\Theta_g\wedge p^*\alpha^{n-1},\label{binomial}\\
\big((k+r)\Theta+p^*\Theta_g\big)^{r-1}\wedge p^*\alpha^{n}&=(k+r)^{r-1}\Theta^{r-1}\wedge p^*\alpha^n.\label{binomial2}
\end{align}
By compactness of $P(E^*)$, the last term in (\ref{binomial}) has the following rough lower bound 
\begin{equation}\label{rough bound}
\Theta^{r-1}\wedge p^*\Theta_g\wedge p^*\alpha^{n-1}> -C \Theta^{r-1}\wedge p^*\alpha^{n} \text{ on }P(E^*)    
\end{equation}
for some positive constant $C$ independent of $k$. Therefore, we deduce from (\ref{binomial}) that
\begin{equation}\label{global}
 \big((k+r)\Theta+p^*\Theta_g\big)^r\wedge p^*\alpha^{n-1}>-Cr(k+r)^{r-1} \Theta^{r-1}\wedge p^*\alpha^{n}\text{ on } P(E^*)    
\end{equation} because $\Theta^r\wedge p^*\alpha^{n-1}\geq 0$ on $P(E^*)$ by the assumption of Theorem \ref{thm quasi to full}. Using (\ref{binomial2}), inequality (\ref{global}) can be written as
\begin{equation}\label{global'}
  \big((k+r)\Theta+p^*\Theta_g\big)^r\wedge p^*\alpha^{n-1}>-Cr\big((k+r)\Theta+p^*\Theta_g\big)^{r-1}\wedge p^*\alpha^{n} \text{ on } P(E^*).  
\end{equation}

By the assumption of Theorem \ref{thm quasi to full}, there exists $t_0\in X$ such that
$\Theta^r\wedge p^*\alpha^{n-1}>0$ for any point on the fiber $P(E^*_{t_0})$. By continuity, there exists a neighborhood $B$ of $t_0$ in $X$ such that
\begin{equation}\label{lower bound} \Theta^r\wedge p^*\alpha^{n-1}>D \Theta^{r-1}\wedge p^*\alpha^{n} \text{ in } p^{-1}(B)   
\end{equation}
for some positive constant $D$. The constant $D$ and the neighborhood $B$ are both independent of $k$. Using (\ref{rough bound}) and (\ref{lower bound}), we deduce from (\ref{binomial}) that, in $p^{-1}(B)$, 
\begin{equation}\label{local'}
\begin{aligned}
\big((k+r)\Theta+p^*\Theta_g\big)^r\wedge p^*\alpha^{n-1}>& \big((k+r)^rD-r(k+r)^{r-1}C\big) \Theta^{r-1}\wedge p^*\alpha^{n}\\
=&\big((k+r)D-rC\big)\big((k+r)\Theta+p^*\Theta_g\big)^{r-1}\wedge p^*\alpha^{n},
\end{aligned}
\end{equation}
where we use (\ref{binomial2}) in the equality. Since the curvature restricted to any fiber $\big((k+r)\Theta+p^*\Theta_g\big)|_{P(E^*_t)}=(k+r)\Theta|_{P(E^*_t)}$ is positive, we can apply Theorem \ref{thm quantitative} to (\ref{global'}) and (\ref{local'}). If we denote by $H_k$ the corresponding $L^2$-metric on $S^kE$, then the curvature $\Theta^{H_k}$ of the Hermitian metric $H_k$ satisfies 
\begin{equation}\label{two ineq}
\begin{aligned}
&\Lambda_\alpha \Theta^{H_k}>-rC\frac{n}{r}\Id_{S^kE} \text{ in } X,\\
&\Lambda_\alpha \Theta^{H_k}>\big((k+r)D-rC\big) \frac{n}{r}\Id_{S^kE}\text{ in } B.   
\end{aligned}
\end{equation}

Let $\tilde{\alpha}$ be the Gauduchon metric conformal to $\alpha$. So, $\tilde{\alpha}=e^f\alpha$ for some smooth function $f$ on $X$, and $\Lambda_{\tilde{\alpha}}\Theta^{H_k}=e^{-f}\Lambda_\alpha\Theta^{H_k}$. We then choose $k$ so large that
\begin{equation}\label{inequality}
\int_{B}e^{-f}\big((k+r)D-rC\big)\frac{n}{r}\tilde{\alpha}^n- \int_{X\setminus B} e^{-f}rC\frac{n}{r} \tilde{\alpha}^n>0;  
\end{equation}
this is possible because the dominant term $(k+r)$ is in the first integral. If we denote the smallest eigenvalue of $\Lambda_{\tilde{\alpha}}\Theta^{H_k}$ by $\lambda_{\min}$, then according to (\ref{two ineq}),
\begin{equation}
  \int_X \lambda_{\min}\tilde{\alpha}^n>\int_{B}e^{-f}\big((k+r)D-rC\big)\frac{n}{r}\tilde{\alpha}^n- \int_{X\setminus B} e^{-f}rC\frac{n}{r} \tilde{\alpha}^n,
\end{equation}
which is positive by (\ref{inequality}). Then by \cite[Remark 1.6]{LiZhangZhang}, there exists a Hermitian metric $H'_k$ on $S^kE$, probably different from $H_k$, such that $\Lambda_{\tilde{\alpha}}\Theta^{H'_k}>0$.  

By \cite[Theorem 1.4]{LiZhangZhang}, the fact $\Lambda_{\tilde{\alpha}}\Theta^{H'_k}>0$ implies $\mu_L(S^kE,\tilde{\alpha})>0$. But $\mu_L(S^kE,\tilde{\alpha})=k\mu_L(E,\tilde{\alpha})$, so $\mu_L(E,\tilde{\alpha})>0$. By \cite[Theorem 1.4]{LiZhangZhang} again, there exists a Hermitian metric $H$ on $E$ such that $\Lambda_{\tilde{\alpha}} \Theta^H>0$. Since $\Lambda_{\tilde{\alpha}} \Theta^H=e^{-f}\Lambda_\alpha\Theta^H$, we have $\Lambda_\alpha\Theta^H>0$.
\end{proof}

Finally, we present the proof of Theorem \ref{thm uniruled}.
\begin{proof}[Proof of Theorem \ref{thm uniruled}]
We first consider the general fibration $p:\mathcal{X}^{n+m}\to Y^m$ and a Hermitian line bundle $(L,h)$ over $\mathcal{X}$ as in the Introduction. Assume $h$ is uniformly weakly RC-quasi-positive. That is, it is
uniformly weakly RC-semipositive everywhere and uniformly weakly RC-positive somewhere (i.e., there exist a point $t\in Y$ and a nonzero $v\in T_tY$ such that $\Theta(\tilde{v},\bar{\tilde{v}})|_{(t,z)}>0$ for any lift $\tilde{v}$ of $v$ to $T_{(t,z)}\mathcal{X}$). By \cite[Theorem 4]{wu2025uniform}, the Hermitian bundle $(V,H)$ is uniformly RC-quasi-positive (\cite[Theorem 4]{wu2025uniform} covers only the positive case, but the semipositive case can be deduced similarly).

Now, let us apply this result to the setup of Theorem \ref{thm uniruled}, $P(E^*)\to X$ with $E=TX$. We use the weaker assumption that $E$ is uniformly weakly RC-quasi-positive, namely, there exists a metric $h$ on $O_{P(E^*)} (1)$
 with the quasi-positivity property. We choose $O_{P(E^*)}(r)$ for $L$ with the metric $h^r$, so the bundle $V$ is $\det E$, and we denote the $L^2$-metric on $\det E$ by $H$. According to the result in the first paragraph, the Hermitian bundle ($\det E,H$) is uniformly RC-quasi-positive, hence RC-quasi-positive. This completes the proof because the line bundle $\det E=\det TX=K^{-1}_X$. 
\end{proof}

\bibliographystyle{amsalpha}
\bibliography{Dominion}

\textsc{National Central University, No.300, Zhongda Road, Taoyuan, Taiwan}

\texttt{\textbf{wuuuruuu@gmail.com}}

\end{document}